\documentclass[11pt,a4paper]{article}

\usepackage[left=2.3cm, top=2.3cm,bottom=2.3cm,right=2.3cm]{geometry}

\usepackage{mathtools,amsmath,amssymb,amsthm,mathrsfs,calc,graphicx,stmaryrd,xcolor,dsfont,tikz,pgfplots,bbm,float,array,epsfig,hyperref,color}
\usetikzlibrary{calc}
\usepackage[numbers,square]{natbib}
\usepackage[british]{babel}
\usepackage{amsfonts}              
\usepackage{enumitem}

\usepackage{manfnt} 
\usepackage{tikz} 

\usepackage{tikz}
\usetikzlibrary{arrows.meta, positioning}

\numberwithin{equation}{section}
\numberwithin{figure}{section}

\allowdisplaybreaks[4]

\newtheorem {theorem}{Theorem}[section]
\newtheorem {proposition}[theorem]{Proposition}
\newtheorem {lemma}[theorem]{Lemma}

{\theoremstyle{definition}
\newtheorem{definition}[theorem]{Definition}

\newtheorem*{convention*}{Convention}

\newtheorem {example}[theorem]{Example}
\newtheorem {remark}[theorem]{Remark}
}

{
\theoremstyle{remark}
}

\newcommand{\Cov}{\operatorname{Cov}}

\newcommand{\ii}{{\rm{i}}}

\DeclareMathOperator{\Id}{Id}

\def\ba{\begin{array}}
\def\ea{\end{array}}
\def\bea{\begin{eqnarray} \label}
\def\eea{\end{eqnarray}}
\def\be{\begin{equation} \label}
\def\ee{\end{equation}}
\def\bit{\begin{itemize}}
\def\eit{\end{itemize}}
\def\ben{\begin{enumerate}}
\def\een{\end{enumerate}}

\def\lan{\langle}
\def\ran{\rangle}

\def\BB{\mathbb{B}}
\def\CC{\mathbb{C}}
\def\E{\mathbb{E}}

\def\N{\mathbb{N}}
\def\P{\mathbb{P}}

\def\R{\mathbb{R}}
\def\RRd1{\mathbb{R}^{d+1}}

\def\ZZ{\mathbb{Z}}

\newcommand{\eee}{{\rm e}}

\newcommand{\ind}{\mathbbm{1}}
\newcommand{\eps}{\varepsilon}
\newcommand{\pos}{\mathop{\mathrm{pos}}\nolimits}

\newcommand{\aff}{\mathop{\mathrm{aff}}\nolimits}
\newcommand{\lin}{\mathop{\mathrm{lin}}\nolimits}
\newcommand{\conv}{\mathop{\mathrm{conv}}\nolimits}

\newcommand{\dd}{{\rm d}}
\newcommand{\bsl}{\backslash}
\DeclareMathOperator{\relint}{relint}

\DeclareMathOperator{\diag}{diag}
\DeclareMathOperator{\rank}{rank}

\begin{document}

\title{\bfseries Solid-Angle Formulas for Simplicial Cones and Random Shifted Sections of Polytopes}

\author{Philipp Schange}

\date{}

\maketitle

\begin{abstract}
We consider simplicial cones $C = \operatorname{pos}(v_1, \ldots, v_n) \subseteq \mathbb{R}^n$ whose generators have a Gram matrix of the form $(\langle v_i, v_j \rangle)_{i,j=1}^n = \operatorname{Id}_n+\sum_{j=1}^p \lambda_j u_j u_j^T$, where $p\in \{0,\ldots, n\}$, $\lambda_1, \ldots, \lambda_p > -1$ and $u_1,\ldots, u_p \in \R^n$ are orthogonal unit vectors. We derive an explicit $p$-dimensional integral formula for the solid angle of $C$ involving the analytic continuation of the standard normal distribution function.

As an application, explicit formulas are obtained for the expected face numbers of sections of acute orthocentric simplices and crosspolytopes with uniformly distributed linear subspaces translated by a fixed vector. The translation vector is assumed to lie outside the affine hulls of all proper faces of the polytope. All solid angles entering these formulas are expressed as integrals of dimension at most two. This application includes regular simplices and regular crosspolytopes as special cases.

\noindent
\bigskip
\\
\textbf{Keywords}. Convex cone, solid angle, random sections of polytopes, regular simplex, orthocentric simplex, crosspolytope, conic intrinsic volume, standard normal distribution function, error function\\
\textbf{MSC 2020}. Primary: 52A20, 60D05; Secondary: 52A22, 52B11.
\end{abstract}

\tableofcontents

\section{Introduction}

\subsection{Solid angles of simplicial cones}

For linearly independent $v_1,\ldots,v_n\in\R^n$, consider the simplicial cone
$$
C
=
\pos(v_1,\ldots,v_n)
:=
\left\{ \sum_{i=1}^n a_i v_i: a_1\geq 0, \ldots, a_n \geq 0 \right\}.
$$
Let $\lan \,\cdot\, , \, \cdot \, \ran$ and $\|\,\cdot \, \|$ denote the standard Euclidean scalar product and norm, respectively.
The solid angle of $C$ is
$$
\alpha(C)
:=
\frac{\lambda_n(C \cap \BB^n)}{\lambda_n(\BB^n)},
$$
where $\BB^n = \{x\in \R^n: \|x\| \leq 1\}$ is the unit ball in $\R^n$ and $\lambda_n(\,\cdot\,)$ denotes the Lebesgue measure on $\R^n$.
For lower-dimensional cones, the solid angle is understood relative to their linear hull.

In dimension $n=3$, explicit formulas for $\alpha(C)$ date back to~\citet{euler_mensura} and~\citet{lagrange_1799solutions}; see, e.g., \cite{eriksson_solid_angles}. For arbitrary dimension $n$,~\citet{aomoto} obtained an expansion for $\alpha(C)$ in terms of an infinite hypergeometric series in the $\binom n2$ variables $\langle v_i/\|v_i\|, v_j/\|v_j\|\rangle$, $1\leq i < j \leq n$. \citet{ribando} rediscovered Aomoto's series and gave a simplified proof.  See~\cite[Section~3]{beck_robins_sam} for a discussion of these results.
\citet{fitisone_zhou_solid_angle} developed methods to compute solid angles based on Aomoto's series.

As a probabilistic approach, solid angles of simplicial cones can be viewed as orthant probabilities of centered multivariate Gaussian distributions. More precisely, if
$$
\Gamma := \left( \lan v_i, v_j \ran \right)_{i,j=1}^n
$$
is the Gram matrix of the generators of $C=\pos(v_1, \ldots, v_n)$, then $\alpha(C)= \P(Z\in (0,\infty)^n)$, where $Z$ is a centered Gaussian
vector with covariance matrix $\Gamma^{-1}$; see, for example,~\cite[Proposition~5.1]{kabluchko_zaporozhets_gauss_simplex}.
Expressing this probability in terms of the density of $Z$ gives
\begin{equation}\label{eq:intro_general_solid_angle}
\alpha(C)
=
\frac{\sqrt{\det\Gamma}}{(2\pi)^{n/2}}
\int_{(0,\infty)^n} \exp\left( -\frac12 x^T\Gamma x \right)\,\dd x.
\end{equation}
Computing the solid angle hence generally leads to an $n$-dimensional integral.

Here, we simplify the integral representation~\eqref{eq:intro_general_solid_angle} for simplicial cones with generators whose Gram matrix is of the form $D + Q$, where $D$ is a diagonal matrix with positive entries on the diagonal, and $Q \in \R^{n\times n}$ is symmetric and has rank at most $p\in \{0, 1, \ldots, n\}$. As we shall see, this is equivalent to the existence of generators $v_1,\ldots,v_n$ of the cone with Gram matrix
\begin{equation}\label{eq:intro_low_rank_Gram}
\Gamma
=
\Id_n+\sum_{j=1}^p\lambda_j u_ju_j^T,
\end{equation}
where $u_1,\ldots,u_p\in\R^n$ are orthogonal unit vectors, $\lambda_1,\ldots,\lambda_p>-1$, and $\Id_n \in \R^{n\times n}$ denotes the identity matrix. The first main result replaces the $n$-dimensional integral in~\eqref{eq:intro_general_solid_angle} by an integral over $\R^p$.
To formulate this,
set $\ii=\sqrt{-1}$ and
let
$$
\Phi: \CC \rightarrow \CC, \quad \Phi(z) = \frac{1}{2}+\frac{1}{\sqrt{2 \pi}} \int_{0}^{z} \eee^{-x^2/2} \dd x
$$
denote the analytic continuation of the standard normal distribution function.

\begin{theorem}\label{theo:intro_solid_angle}
Let $v_1,\ldots,v_n\in\R^n$ be linearly independent vectors whose Gram
matrix is given by~\eqref{eq:intro_low_rank_Gram}. Then the solid angle
of $C=\pos(v_1,\ldots,v_n)$ is
$$
\alpha(C)
=
\frac{1}{(2\pi)^{p/2}} \int_{\R^p} \eee^{-(y_1^2+ \ldots + y_p^2)/2} \prod_{r=1}^n \Phi \Bigg( \ii \sum_{j=1}^p \frac{\sqrt{\lambda_j}}{\sqrt{1+\lambda_j}} (u_j)_r y_j \Bigg)\, \dd y_1 \cdots \dd y_p,
$$
where we use the convention $\sqrt{x}=\ii\sqrt{-x}$ for $x<0$.
\end{theorem}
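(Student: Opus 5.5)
We start from the Gaussian representation \eqref{eq:intro_general_solid_angle}, which gives $\alpha(C)=\P(Z\in(0,\infty)^n)$ with $Z\sim N(0,\Gamma^{-1})$. The natural idea is to write $Z$ as a standard Gaussian vector plus a low-rank Gaussian perturbation, condition on the perturbation, and then use independence of the coordinates. Since the $u_j$ are orthonormal, we have
$$
\Gamma^{-1}=\Id_n+\sum_{j=1}^p\mu_j u_ju_j^T,\qquad \mu_j=\frac{1}{1+\lambda_j}-1=-\frac{\lambda_j}{1+\lambda_j}\in(-1,\infty).
$$

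\textbf{Case $\lambda_j\le 0$ for all $j$.} Then $\mu_j\ge 0$. Take $X\sim N(0,\Id_n)$ and $Y=(Y_1,\dots,Y_p)\sim N(0,\Id_p)$ independent, and set $Z=X+\sum_j\sqrt{\mu_j}\,Y_ju_j$. This vector has covariance $\Gamma^{-1}$. Conditioning on $Y=y$, the coordinates of $Z$ are independent, so
$$
\P\bigl(Z\in(0,\infty)^n\mid Y=y\bigr)=\prod_{r=1}^n\Phi\Bigl(\sum_j\sqrt{\mu_j}(u_j)_ry_j\Bigr).
$$
With the convention $\sqrt{\lambda_j}=\ii\sqrt{-\lambda_j}$ we get $\ii\sqrt{\lambda_j}/\sqrt{1+\lambda_j}=-\sqrt{\mu_j}$. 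The sign is harmless because $y\mapsto -y$ preserves the Gaussian weight. Integrating over $y$ therefore gives the claimed formula in this case.

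\textbf{General case by analytic continuation.} Fix $u_1,\dots,u_p$ and regard both sides as functions of $\lambda=(\lambda_1,\dots,\lambda_p)\in(-1,\infty)^p$, with the left side given by \eqref{eq:intro_general_solid_angle}. Note that any $\Gamma$ of this form is positive definite, so generators with this Gram matrix exist, and $\alpha$ depends only on $\Gamma$.
\begin{itemize}
\item It is cleaner to use the parameters $t_j=\lambda_j/(1+\lambda_j)\in(-\infty,1)$. The right side then involves only $\ii\sqrt{t_j}\,y_j$.
\item The integrand is even in each $y_j$ separately only after combining signs, so instead expand: the integral against $\eee^{-y_j^2/2}$ of a function of $\sqrt{t_j}y_j$ depends only on $t_j$. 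Indeed, Taylor expanding in $y$, odd powers of $y_j$ vanish and even powers give $t_j^k$.
\item Hence the right side is an analytic function of $t$ in a complex neighbourhood of $(-\infty,1)^p$.
\item The left side is also real-analytic in $t$: $\Gamma=\Id_n+\sum_j\frac{t_j}{1-t_j}u_ju_j^T$ is analytic in $t$, and the integral in \eqref{eq:intro_general_solid_angle} is analytic in $\Gamma$ on positive definite matrices, since the integrand has exponential decay uniformly on compacts and can be differentiated under the integral.
\item The two sides agree on the open set $t\in(-\infty,0]^p$ (which corresponds to $\lambda_j\le0$, containing a nonempty open subset). By the identity theorem for real-analytic functions on the connected domain $(-\infty,1)^p$, they agree everywhere.
\end{itemize}

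\textbf{Main obstacle.} The hard part is justifying analyticity and convergence of the right side when $t_j>0$. There $\Phi$ is evaluated at imaginary arguments and grows like $\eee^{s^2/2}/s$, so the product over $r$ may grow like $\exp\bigl(\tfrac12\sum_r(\sum_j\sqrt{t_j}(u_j)_ry_j)^2\bigr)=\exp\bigl(\tfrac12\sum_jt_jy_j^2\bigr)$, using orthonormality of the $u_j$. Since $t_j<1$, this is dominated by $\eee^{-|y|^2/2}$. The growth estimate $|\Phi(\ii s)|\le C(1+\eee^{s^2/2})$ combined with this quadratic-form identity gives locally uniform integrable domination for complex $t$ near the real segment with $\mathrm{Re}\,t_j<1$. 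That yields holomorphy via Morera and dominated convergence, which completes the argument.
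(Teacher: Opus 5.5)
\textbf{Verdict: correct, by a different route from the paper.} The paper never continues in the parameters. It proceeds as follows:
\begin{itemize}
\item It writes $x^T\Gamma x=\|x\|^2+\sum_j\lambda_j\langle u_j,x\rangle^2$ in the $n$-dimensional Gaussian integral.
\item It linearizes each factor $\exp(-\frac{\lambda_j}{2}\langle u_j,x\rangle^2)$ by the Hubbard--Stratonovich identity with complex argument $z=\ii\sqrt{\lambda_j}\langle u_j,x\rangle$, and justifies Fubini.
\item It integrates out each $x_r$ with the half-line identity, which produces $\Phi$ at imaginary arguments, and finishes with a linear change of variables.
\end{itemize}
So all $\lambda_j>-1$ are handled in one computation. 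Complex analysis enters only through two one-variable identities in $z$.

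Your first step is exactly the paper's remark on Marsaglia's reduction. It is a genuine probability computation only for $\lambda_j\in(-1,0]$. Continuing from there shows the formula is the analytic continuation of a Gaussian mixture, which is illuminating. The cost is that you need holomorphy of both sides in the parameters, which is more machinery than the paper's single Fubini check. Both proofs rest on the same estimate you isolate: orthonormality turns $\sum_r(\sum_j c_j(u_j)_r y_j)^2$ into $\sum_j c_j^2y_j^2$, and $\lambda_j>-1$ (your $t_j<1$) keeps the Gaussian weight dominant.

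\textbf{Steps that need tightening.}

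\emph{Analyticity of the right side across $t_j=0$.} The ``Taylor expand in $y$'' justification is not valid in general. The Taylor majorant of $\Phi$ grows like $\eee^{x^2/2}$, up to a polynomial factor, even on the real axis. Already for $n=2$, $p=1$, $u=(1,1)^T/\sqrt2$ and $\lambda<-\frac12$ (so $t<-1$), the term-by-term series diverges. The rigorous version is the one you allude to with ``combining signs''. Put
\[
F(w)=(2\pi)^{-p/2}\int_{\R^p}\eee^{-|y|^2/2}\prod_r\Phi\Big(\ii\sum_j w_j (u_j)_r y_j\Big)\,\dd y .
\]
\begin{itemize}
\item You need the complex-argument form of your growth bound, $|\Phi(a+\ii b)|\le C(1+\eee^{b^2/2})$ for real $a,b$, since $t$ becomes complex.
\item This bound and orthonormality make $F$ holomorphic on $\{|\mathrm{Re}\,w_j|<1 \text{ for all } j\}$.
\item The substitution $y_j\mapsto-y_j$ shows $F$ is even in each $w_j$.
\item Hence $t\mapsto F(\sqrt{t_1},\dots,\sqrt{t_p})$ is well defined and continuous on $\{\mathrm{Re}\,t_j<1-(\mathrm{Im}\,t_j)^2/4\}$. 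It is holomorphic there by removable singularities at $t_j=0$ and then Osgood's lemma.
\end{itemize}

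\emph{Analyticity of the left side.} Differentiating under the integral gives only smoothness. Analyticity follows by allowing complex $t$ with $\mathrm{Re}\,t_j<1$. There $\Gamma(t)$ has positive definite real part, so $|\exp(-\frac12 x^T\Gamma x)|=\exp(-\frac12 x^T(\mathrm{Re}\,\Gamma)x)$ gives local domination. Also $\sqrt{\det\Gamma}=\prod_j(1-t_j)^{-1/2}$ is holomorphic there.

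With these repairs, your identity-theorem step goes through on the connected set $(-\infty,1)^p$.
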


Although every simplicial cone admits a representation of the form~\eqref{eq:intro_low_rank_Gram} with $p=n$, the formula reduces the dimension of integration from $n$ to $p$ whenever the generators can be chosen so that $p<n$.

For $p=0$, the solid angle $2^{-n}$ of an orthant is recovered. The case $p=1$ includes the pointed parts of tangent and normal cones of regular simplices considered in~\cite{vershik_sporyshev, kabluchko_zaporozhets_absorption} and, more generally, the subclass of orthocentric cones covered by~\cite[Corollary~7.6]{kabluchko_schange_angles_orthocentric_simplices}.
The case $p=2$ occurs for cones over shifted faces of regular (or acute orthocentric) simplices and symmetric (or asymmetric) crosspolytopes. We use this observation to derive explicit formulas for the expected face numbers of random sections of these polytopes; see Section~\ref{sec:intro_random_sections}.

\subsection{Random sections of acute orthocentric simplices and crosspolytopes}\label{sec:intro_random_sections}
Let $P\subseteq\R^n$ be an $n$-dimensional polytope, and fix $x\in\R^n$. For $\ell\in\{1,\ldots,n-1\}$, let $L$ be uniformly distributed on the Grassmannian of $(n-\ell)$-dimensional linear subspaces of $\R^n$. We consider the random section
$$
P\cap(x+L).
$$
The quantities of interest are the expected face numbers
$$
\E f_j \left( P\cap(x+L) \right),
\qquad
j\in\{0,\ldots,n-\ell-1\},
$$
where $f_j(Q)$ denotes the number of $j$-dimensional faces of a polytope $Q$.
By convention, faces are non-empty.
We set $f_j(\varnothing)=0$ to cover the case of an empty section.

\citet{KabluchkoSeidel2022} obtained explicit formulas for these expected face numbers when $x=0$ and $P$ is a regular simplex, a regular crosspolytope, or a cube, each centered at the origin. Here, we allow fixed shifts $x$ satisfying $x\notin\aff(F)$ for every proper face $F$ of $P$, where $\aff(F)$ denotes the affine hull of $F$.

We consider the following two classes of polytopes $P$ that generalize regular simplices and regular crosspolytopes.

\begin{enumerate}
\item Acute orthocentric simplices with orthocenter at the origin: For \(n\geq2\), these are simplices whose altitudes meet at the origin and whose interior contains the origin. They can be represented as
$$
P
=
\conv(y_0,\ldots,y_n),
$$
where, for some $\tau_0,\ldots,\tau_n>0$ and $s:=\tau_0^2+\cdots+\tau_n^2$, the vertices  $y_0, \ldots, y_n \in \R^n$ satisfy
$$
\lan y_i,y_k\ran
=
-\frac1s+\frac{\delta_{ik}}{\tau_i^2},
\qquad
\text{ for all } i,k\in\{0,\ldots,n\}.
$$
Here, $\conv(\, \cdot \,)$ denotes the convex hull and $\delta_{ik}$ denotes the Kronecker delta.

In particular, all regular simplices centered at the origin are obtained by taking $\tau_0 = \ldots = \tau_n$.

\item Symmetric and asymmetric crosspolytopes: These are the polytopes
$$
P
=
\conv \left( \tau_1^+ e_1, \ldots, \tau_n^+ e_n, -\tau_1^- e_1, \ldots, -\tau_n^- e_n \right),
$$
where $e_1,\ldots,e_n$ is the standard orthonormal basis of $\R^n$ and $\tau_1^+, \ldots, \tau_n^+, \tau_1^- ,\ldots, \tau_n^->0$.

Taking all $2n$ parameters equal yields a regular crosspolytope centered at the origin.
\end{enumerate}

If $P$ is isometric to one of the polytopes above, applying a suitable Euclidean isometry to both $P$ and $x$ reduces the problem to the corresponding representation above. This follows from the rotational invariance of the distribution of $L$. In particular, the formulas cover regular simplices and regular crosspolytopes in arbitrary positions.

As in~\cite{KabluchkoSeidel2022}, the computation of these expected face numbers reduces to determining the probabilities that $L\cap\pos(F-x)\neq\{0\}$, where $F$ ranges over the $(j+\ell)$-dimensional faces of $P$. By the conic Crofton formula, these probabilities can be expressed in terms of the conic intrinsic volumes of
$\pos(F-x)$.

For both classes, every proper face $F$ is a simplex and $x\notin \aff(F)$, so that $\pos(F-x)$ is a simplicial cone. The cones $\operatorname{pos}(F-x)$, their faces, and the pointed parts of the corresponding normal cones admit generators whose Gram matrices have the form~\eqref{eq:intro_low_rank_Gram} with $p\leq 2$ in the respective dimensions. Theorem~\ref{theo:intro_solid_angle} therefore expresses all solid angles entering these conic intrinsic volumes as integrals of dimension at most $2$.
Explicit formulas for the conic intrinsic volumes and the expected face numbers are given in Proposition~\ref{prop:intrinsic_volumes_orthocentric_cone} and Theorem~\ref{theo:expected_faces_shifted_sections}.

\section{The class of cones whose Gram matrix is a small rank perturbation of the identity matrix}\label{sec:class_of_cones}

In this section we study the class of simplicial cones with generators with a Gram matrix of the form~\eqref{eq:intro_low_rank_Gram}. We show that this Gram-matrix structure is preserved under passing to faces and, for full-dimensional cones, to the polar cone.

Let $n\in\N$. A \emph{polyhedral cone} in $\R^n$ is of the form
$$
C=\pos(v_1,\ldots,v_m)
:=
\left\{\sum_{i=1}^m a_i v_i:a_1,\ldots,a_m\geq 0 \right\}
$$
for finitely many vectors $v_1,\ldots,v_m\in\R^n$.
We use the convention $\pos(\varnothing)=\{0\}$. 
We write $\lin(C)$ for the linear hull of $C$ and put $\dim(C):=\dim(\lin(C))$.

The cone is called a \emph{simplicial cone} if it admits a set of linearly independent generators. In particular, $\{0\}$ is regarded as a zero-dimensional simplicial cone.

Recall the class of cones of interest in this section: We consider cones that admit generators $v_1, \ldots, v_n$ whose Gram matrix $\Gamma = (\lan v_i, v_j\ran)_{i,j=1}^n$ has the form
\begin{equation}\label{eq:Gram_matrix_with_orthonormal_perturbation}
\Gamma
=
\Id_n + \sum_{j=1}^p \lambda_j u_j u_j^T,
\end{equation}
with $p\in \{0,1, \ldots, n\}$, $\lambda_1, \ldots, \lambda_p \in \R$, and orthogonal unit vectors $u_1, \ldots, u_p \in \R^n$.

The generators of a cone are not uniquely determined, multiplying each of them with an arbitrary positive number does not change the cone. The following lemma shows that a cone admits generators satisfying~\eqref{eq:Gram_matrix_with_orthonormal_perturbation} if and only if it admits generators whose Gram matrix is sum of a diagonal matrix with positive diagonal entries, and a symmetric matrix of rank at most $p$.

\begin{lemma}\label{lem:characterization_Gram_matrices_with_orthonormal_perturbation}
Let $C \subseteq \R^n$ be an $n$-dimensional simplicial cone. Let $p\in \{0,1,\ldots, n\}$. The following assertions are equivalent.
\begin{enumerate}
\item $C=\pos(v_1, \ldots, v_n)$ for some $v_1, \ldots, v_n\in \R^n$ with Gram matrix
$$
\left( \langle v_i, v_j \rangle \right)_{i,j=1}^n
=
\Id_n + \sum_{j=1}^p \lambda_j u_j u_j^T,
$$
where $\lambda_1, \ldots, \lambda_p \in \R$ and $u_1, \ldots, u_p \in \R^n$ are orthonormal.
\item $C=\pos(\widetilde{v_1}, \ldots, \widetilde{v_n})$ for some $\widetilde{v_1}, \ldots, \widetilde{v_n} \in \R^n$ with Gram matrix
$$
\left( \langle \widetilde{v_i}, \widetilde{v_j} \rangle \right)_{i,j=1}^n
=
\diag(d_1, \ldots, d_n) + Q,
$$
where $d_1, \ldots, d_n>0$ and $Q \in \R^{n\times n}$ is symmetric with $\rank(Q)\leq p$.
\end{enumerate}
\end{lemma}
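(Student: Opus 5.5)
The implication (1)$\Rightarrow$(2) is immediate. Take $\widetilde{v_i}=v_i$, $d_1=\cdots=d_n=1$ and $Q=\sum_{j=1}^p\lambda_j u_ju_j^T$. This $Q$ is symmetric and has rank at most $p$, being a sum of $p$ rank-one matrices.

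For (2)$\Rightarrow$(1), the plan is to rescale the generators. Put $D:=\diag(d_1,\ldots,d_n)$ and $v_i:=d_i^{-1/2}\widetilde{v_i}$. Since $d_i>0$, each $v_i$ is a positive multiple of $\widetilde{v_i}$, so $\pos(v_1,\ldots,v_n)=\pos(\widetilde{v_1},\ldots,\widetilde{v_n})=C$. The Gram matrix of the rescaled generators is
$$
\left(\lan v_i,v_j\ran\right)_{i,j=1}^n
= D^{-1/2}(D+Q)D^{-1/2}
= \Id_n + D^{-1/2}QD^{-1/2}.
$$
The matrix $Q':=D^{-1/2}QD^{-1/2}$ is symmetric. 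It has the same rank as $Q$, because $D^{-1/2}$ is invertible, so $\rank(Q')\leq p$.

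Next apply the spectral theorem to the real symmetric matrix $Q'$. It has an orthonormal eigenbasis $w_1,\ldots,w_n$ with real eigenvalues $\mu_1,\ldots,\mu_n$, and at most $r:=\rank(Q')\leq p$ of these eigenvalues are nonzero. Order them so that $\mu_1,\ldots,\mu_r$ are the nonzero ones. Then
$$
Q'=\sum_{j=1}^r\mu_j w_jw_j^T .
$$
If $r<p$, pad with $u_j:=w_j$ and $\lambda_j:=0$ for $j=r+1,\ldots,p$. This is possible since $p\leq n$. With $u_j:=w_j$ and $\lambda_j:=\mu_j$ for $j\leq r$, we obtain orthonormal $u_1,\ldots,u_p$ and real $\lambda_1,\ldots,\lambda_p$ satisfying $Q'=\sum_{j=1}^p\lambda_ju_ju_j^T$. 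This gives (1).

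There is no real obstacle here. The only points needing care are that rescaling by positive factors leaves the cone unchanged, which is why $d_i>0$ is assumed, and that the padding with zero eigenvalues gives exactly $p$ terms. The lemma's statement puts no sign condition on the $\lambda_j$. Linear independence of the $v_i$ already forces $\Id_n+\sum_j\lambda_ju_ju_j^T$ to be positive definite, hence $\lambda_j>-1$ for the nonzero ones, but this is not needed for the lemma.
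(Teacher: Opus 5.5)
Your proposal is correct and follows the paper's proof essentially step for step. For (a)$\Rightarrow$(b) you take $d_i=1$ and $Q=\sum_{j=1}^p\lambda_j u_ju_j^T$. For (b)$\Rightarrow$(a) you rescale $v_i=\widetilde{v_i}/\sqrt{d_i}$, obtain the Gram matrix $\Id_n+D^{-1/2}QD^{-1/2}$, apply the spectral theorem, and pad with orthonormal vectors carrying zero coefficients when $\rank(Q)<p$.
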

\begin{proof}
(a) $\Rightarrow$ (b): Given the representation in (a), choose $d_1=\cdots=d_n=1$ and set
$
Q
:=
\sum_{j=1}^p\lambda_j u_j u_j^T.
$
Then $Q$ is symmetric and the image of $Q$ is contained in $\lin(u_1,\ldots,u_p)$, so that $\rank(Q)\leq p$.

\medskip
\noindent
(b) $\Rightarrow$ (a): Given is the representation in (b), that is, $
( \langle \widetilde{v_i}, \widetilde{v_j} \rangle )_{i,j=1}^n
=
D+ Q,
$  with $D:=\diag(d_1,\ldots, d_n)$. Then,
$$
D+Q
=
D^{1/2} (\Id_n + D^{-1/2} Q D^{-1/2}) D^{1/2}.
$$
Setting $v_i = \widetilde{v_i}/\sqrt{d_i}$ for all $i=1,\ldots, n$, hence $C=\pos(v_1, \ldots, v_n)$ and
$$
\left( \langle v_i, v_j \rangle \right)_{i,j=1}^n
=
\Id_n + D^{-1/2} Q D^{-1/2}.
$$
The matrix $D^{-1/2}QD^{-1/2}$ is real and symmetric. By the spectral theorem, it admits a representation
$$
D^{-1/2}QD^{-1/2}
=
\sum_{j=1}^r\lambda_j u_j u_j^T,
$$ where $ r=\rank(D^{-1/2}QD^{-1/2})=\rank(Q)\leq p$, the numbers $\lambda_1,\ldots,\lambda_r$ are non-zero, and $u_1,\ldots,u_r$ are orthonormal. If $r<p$, extend $u_1,\ldots,u_r$ to an orthonormal family $u_1, \ldots, u_r, \ldots, u_p$ and set $\lambda_{r+1} = \ldots = \lambda_p=0$.
\end{proof}

Gram matrices of linearly independent generators are positive definite. Conversely, for every positive definite and symmetric matrix $\Gamma$, there exist linearly independent vectors $v_1, \ldots, v_n \in \R^n$ with Gram matrix $\Gamma$.
The next lemma gives a necessary and sufficient condition for the matrix in~\eqref{eq:Gram_matrix_with_orthonormal_perturbation} to be positive definite, as well as its determinant.

\begin{lemma}\label{lem:equiv_condition_for_pos_def_for_orthonormal_perturb}
Let $p\in \{0,1, \ldots, n\}$, let $\lambda_1, \ldots, \lambda_p \in \R$, and let $u_1, \ldots, u_p \in \R^n$ be orthonormal. Then,
\begin{enumerate}
\item The matrix
$
\Id_n + \sum_{j=1}^p \lambda_j u_j u_j^T
$
is positive definite if and only if $\lambda_1>-1, \ldots, \lambda_p>-1$.
\item Its determinant is given by
$$
\det\left(
\Id_n + \sum_{j=1}^p \lambda_j u_j u_j^T
\right)
=
(1+\lambda_1) \cdots (1+\lambda_p).
$$
\end{enumerate}
\end{lemma}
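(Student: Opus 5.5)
The plan is to compute the spectrum of $M:=\Id_n+\sum_{j=1}^p \lambda_j u_j u_j^T$ directly; both claims then follow at once. First extend the orthonormal family $u_1,\ldots,u_p$ to an orthonormal basis $u_1,\ldots,u_n$ of $\R^n$. By orthonormality, $u_j^Tu_k=\delta_{jk}$, and therefore
$$
M u_k = u_k + \sum_{j=1}^p \lambda_j u_j (u_j^T u_k) = \begin{cases} (1+\lambda_k)\, u_k, & k\le p,\\ u_k, & k>p.\end{cases}
$$
So $u_1,\ldots,u_n$ is an orthonormal eigenbasis of the symmetric matrix $M$, with eigenvalues $1+\lambda_1,\ldots,1+\lambda_p$ together with the eigenvalue $1$ of multiplicity $n-p$.

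For part (a), a real symmetric matrix is positive definite if and only if all of its eigenvalues are positive. The eigenvalue $1$ is always positive, so $M$ is positive definite exactly when $1+\lambda_j>0$ for every $j$, that is, when $\lambda_1,\ldots,\lambda_p>-1$. One could also argue directly: writing $x=\sum_k c_k u_k$ gives $x^TMx=\sum_{k\le p}(1+\lambda_k)c_k^2+\sum_{k>p}c_k^2$. This quadratic form is positive for all $x\neq 0$ precisely under that condition, and testing $x=u_j$ gives the converse.

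For part (b), the determinant is the product of the eigenvalues, which equals $(1+\lambda_1)\cdots(1+\lambda_p)\cdot 1^{n-p}$. Equivalently, let $U$ be the orthogonal matrix with columns $u_1,\ldots,u_n$. Then $U^TMU=\diag(1+\lambda_1,\ldots,1+\lambda_p,1,\ldots,1)$, and $\det U^2=1$.

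There is no real obstacle here. The only points needing care are the extension to an orthonormal basis, which is standard Gram–Schmidt, and the degenerate case $p=0$. In that case $M=\Id_n$, the product is empty and equals $1$, and the condition in (a) is vacuous, so the same argument covers it.
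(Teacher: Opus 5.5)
Your proposal is correct and follows the paper's proof: extend $u_1,\ldots,u_p$ to an orthonormal basis, read off the eigenvalues $1+\lambda_1,\ldots,1+\lambda_p$ and $1$ (with multiplicity $n-p$), and deduce both positive definiteness and the determinant from this spectrum. Your extra quadratic-form check and your remark on the case $p=0$ are fine additions, but they do not change the argument.
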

\begin{proof}
Extend $u_1,\ldots,u_p$ to an orthonormal basis $u_1,\ldots,u_n$ of $\R^n$. We determine the eigenvalues of
$$
\Gamma
:=
\Id_n+\sum_{j=1}^p\lambda_j u_j u_j^T
$$
with respect to this basis. For every $k\in\{1,\ldots,p\}$, orthonormality gives
$$
\Gamma u_k
=
u_k+\sum_{j=1}^p\lambda_j u_j u_j^T u_k
=
u_k+\lambda_k u_k
=
(1+\lambda_k)u_k.
$$
Moreover, for every $k\in\{p+1,\ldots,n\}$, the vector $u_k$ is orthogonal to $\lin(u_1,\ldots,u_p)$, and therefore
$$
\Gamma u_k
=
u_k+\sum_{j=1}^p\lambda_j u_j u_j^T u_k
=
u_k.
$$
Thus the eigenvalues of $\Gamma$,  counted with multiplicity, are $1+\lambda_1,\ldots,1+\lambda_p$ together with $n-p$ additional eigenvalues equal to $1$. Hence $\Gamma$ is positive definite if and only if $1+\lambda_j>0$ for all $j=1,\ldots,p$. Moreover,
$$
\det(\Gamma)
=
(1+\lambda_1) \cdots (1+\lambda_p) \cdot 1^{n-p}
=
(1+\lambda_1) \cdots (1+\lambda_p).
$$
\end{proof}

We next describe the Gram matrices of the faces and the polar cone of a full-dimensional simplicial cone in this class. More precisely: The polar cone, as well as every face of the original cone in its linear hull, again have generators whose Gram matrix is of the form~\eqref{eq:Gram_matrix_with_orthonormal_perturbation}.

The \emph{polar cone} of a polyhedral cone $C \subseteq \R^n$ is defined by
$$
C^{\circ}
:=
\{y \in \R^n: \lan y,x \ran \leq 0 \text{ for all } x\in C\}.
$$
It is again a polyhedral cone, and the double polar theorem says $(C^\circ)^\circ=C$.

\begin{proposition}[Gram matrices of faces and polar cones]
\label{prop:faces_polars_cones_perturbation}
Let $v_1, \ldots, v_n \in \R^n$ be linearly independent with Gram matrix
$$
\left( \langle v_i, v_j \rangle \right)_{i,j=1}^n
=
\Id_n + \sum_{j=1}^p \lambda_j u_j u_j^T,
$$
where $p\in \{0,1,\ldots, n\}$, $\lambda_1, \ldots, \lambda_p \in \R$, and $u_1, \ldots, u_p \in \R^n$ are orthonormal. Put
$$
C
:=
\pos(v_1, \ldots, v_n).
$$
\begin{enumerate}
\item The polar cone of $C$ has the form $C^{\circ} = \pos(w_1, \ldots, w_n)$ for $w_1, \ldots, w_n \in \R^n$ with
\begin{equation}\label{eq:general_repr_polar_cone_Gram_matrix}
\left( \langle w_i, w_j \rangle \right)_{i,j=1}^n
=
\Id_n + \sum_{j=1}^p \frac{-\lambda_j}{1+\lambda_j} u_j u_j^T.	
\end{equation}

\item For every $I\subseteq\{1,\ldots,n\}$, $I \neq \varnothing$, there exist orthonormal vectors $\widetilde{u}_1, \ldots, \widetilde{u}_{\min\{p,|I|\}} \in\R^{|I|}$ and numbers $\widetilde{\lambda}_1, \ldots, \widetilde{\lambda}_{\min\{p,|I|\}} > -1$ such that
\begin{equation}\label{eq:general_repr_face_Gram_matrix}
	( \lan v_i,v_j \ran)_{i,j\in I}
	=
	\Id_{|I|} + \sum_{k=1}^{\min\{p,|I|\}} \widetilde{\lambda}_k \widetilde{u}_k \widetilde{u}_k^T.
\end{equation}
\end{enumerate}
\end{proposition}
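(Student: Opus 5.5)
For part (a) I will use the dual basis. Let $V$ be the matrix with columns $v_1,\ldots,v_n$, so that $\Gamma := V^TV = \Id_n+\sum_{j=1}^p\lambda_j u_ju_j^T$. This matrix is positive definite because the $v_i$ are linearly independent. By Lemma~\ref{lem:equiv_condition_for_pos_def_for_orthonormal_perturb}, $\lambda_j>-1$ for all $j$, so every expression $\frac{-\lambda_j}{1+\lambda_j}$ is well defined.

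Define $W:=-V^{-T}$ and let its columns be $w_1,\ldots,w_n$. Then $\lan w_i,v_k\ran=-\delta_{ik}$. A standard argument identifies the polar cone: write $y=\sum_i b_i w_i$. Then $\lan y,v_k\ran=-b_k$, so $y\in C^\circ$ if and only if all $b_k\geq 0$. Hence $C^\circ=\pos(w_1,\ldots,w_n)$.

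The Gram matrix of the $w_i$ is
$$
W^TW=V^{-1}V^{-T}=(V^TV)^{-1}=\Gamma^{-1}.
$$
Now complete $u_1,\ldots,u_p$ to an orthonormal basis, as in the proof of Lemma~\ref{lem:equiv_condition_for_pos_def_for_orthonormal_perturb}. In that basis $\Gamma$ acts as $1+\lambda_j$ on $u_j$ and as $1$ on the orthogonal complement of $\lin(u_1,\ldots,u_p)$. Therefore
$$
\Gamma^{-1}=\Id_n+\sum_{j=1}^p\Big(\frac1{1+\lambda_j}-1\Big)u_ju_j^T,
$$
which is exactly~\eqref{eq:general_repr_polar_cone_Gram_matrix}. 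No step here should be problematic.

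For part (b), let $E_I\in\R^{n\times|I|}$ be the coordinate selection matrix. Then
$$
(\lan v_i,v_j\ran)_{i,j\in I}=E_I^T\Gamma E_I=\Id_{|I|}+Q_I, \qquad Q_I:=\sum_{j=1}^p\lambda_j (E_I^Tu_j)(E_I^Tu_j)^T.
$$
The matrix $Q_I$ is symmetric, and its rank is at most $\min\{p,|I|\}$: the image lies in the span of $p$ vectors, and the matrix has size $|I|$. I would then argue as in the proof of (b)$\Rightarrow$(a) of Lemma~\ref{lem:characterization_Gram_matrices_with_orthonormal_perturbation}. By the spectral theorem, write $Q_I=\sum_{k=1}^r\widetilde\lambda_k\widetilde u_k\widetilde u_k^T$ with orthonormal $\widetilde u_k$ and $r=\rank(Q_I)\leq m:=\min\{p,|I|\}$. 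If $r<m$, pad with further orthonormal vectors in $\R^{|I|}$ and set $\widetilde\lambda_k=0$ for them; this is possible since $m\leq|I|$.

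It remains to show $\widetilde\lambda_k>-1$. The vectors $v_i$, $i\in I$, are linearly independent, so their Gram matrix is positive definite, and Lemma~\ref{lem:equiv_condition_for_pos_def_for_orthonormal_perturb}(a) yields $\widetilde\lambda_k>-1$. The only point needing care is the rank bound $\min\{p,|I|\}$ together with the padding; I expect this to be the main (though minor) obstacle, and it is handled as described above.
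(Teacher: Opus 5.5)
Your proposal is correct and follows essentially the same route as the paper: dual vectors with $\langle v_i,w_j\rangle=-\delta_{ij}$ have Gram matrix $\Gamma^{-1}$ in (a), and in (b) you restrict the Gram matrix to $I$, apply the spectral theorem, and use Lemma~\ref{lem:equiv_condition_for_pos_def_for_orthonormal_perturb} to get $\widetilde\lambda_k>-1$. The only differences are cosmetic: you verify $C^\circ=\pos(w_1,\ldots,w_n)$ directly instead of citing conic duality, you compute $\Gamma^{-1}$ spectrally rather than by the direct multiplication of Lemma~\ref{lem:inverse_of_matr_with_orthonormal_perturbation}, and you spell out the padding with zero coefficients to exactly $\min\{p,|I|\}$ terms.
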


It turns out that there are generators $w_1, \ldots, w_n$ of the polar cone $C^{\circ}$ whose Gram matrix is inverse to the Gram matrix of $v_1, \ldots, v_n$. For the proof of Proposition~\ref{prop:faces_polars_cones_perturbation}, let us therefore give the inverse of the matrix~\eqref{eq:Gram_matrix_with_orthonormal_perturbation}.

\begin{lemma}\label{lem:inverse_of_matr_with_orthonormal_perturbation}
	For $p\in \{0,1, \ldots, n\}$, $\lambda_1, \ldots, \lambda_p \in \R$, and  orthonormal $u_1, \ldots, u_p \in \R^n$, denote
	$$
	\Gamma
	=
	\Id_n + \sum_{j=1}^p \lambda_j u_j u_j^T.
	$$
	If $\Gamma$ is non-singular, then the inverse of $\Gamma$ is given by
	$$
	\Gamma^{-1}
	=
	\Id_n + \sum_{j=1}^p \frac{-\lambda_j}{1+\lambda_j} u_j u_j^T.
	$$
\end{lemma}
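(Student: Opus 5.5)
The natural approach is a direct verification: multiply $\Gamma$ by the proposed inverse and check that the product is $\Id_n$.

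Nonsingularity is what justifies the division by $1+\lambda_j$. By the argument in the proof of Lemma~\ref{lem:equiv_condition_for_pos_def_for_orthonormal_perturb}, extend $u_1,\ldots,u_p$ to an orthonormal basis. The eigenvalues of $\Gamma$ are then $1+\lambda_1,\ldots,1+\lambda_p$ and $1$ (with multiplicity $n-p$). Equivalently, by part (b) of that lemma, $\det\Gamma=\prod_j(1+\lambda_j)$. So $\Gamma$ is non-singular exactly when $\lambda_j\neq -1$ for all $j$, and the coefficients $-\lambda_j/(1+\lambda_j)$ are well defined.

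Set $\mu_j:=-\lambda_j/(1+\lambda_j)$ and $M:=\Id_n+\sum_{k=1}^p\mu_k u_ku_k^T$. Expanding the product gives
$$
\Gamma M=\Id_n+\sum_{j=1}^p(\lambda_j+\mu_j)u_ju_j^T+\sum_{j,k=1}^p\lambda_j\mu_k u_j(u_j^Tu_k)u_k^T .
$$
Orthonormality gives $u_j^Tu_k=\delta_{jk}$, so the double sum collapses to $\sum_j\lambda_j\mu_j u_ju_j^T$. The coefficient of $u_ju_j^T$ is therefore
$$
\lambda_j+\mu_j(1+\lambda_j)=\lambda_j-\lambda_j=0 ,
$$
and hence $\Gamma M=\Id_n$. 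Since the matrices are square, this already shows $M=\Gamma^{-1}$. Alternatively, one can note that both $\Gamma$ and $M$ are diagonal in the basis $u_1,\ldots,u_n$, with entries $1+\lambda_j$ and $1+\mu_j=1/(1+\lambda_j)$ respectively.

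There is no real obstacle here; the only point needing care is the well-definedness of $1/(1+\lambda_j)$, which the first step settles.
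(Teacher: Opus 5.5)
Your proposal is correct and follows the paper's proof: both use $\det\Gamma=(1+\lambda_1)\cdots(1+\lambda_p)$ from Lemma~\ref{lem:equiv_condition_for_pos_def_for_orthonormal_perturb} to ensure $1+\lambda_j\neq 0$, and then verify by direct multiplication, using orthonormality, that the product with the proposed inverse is $\Id_n$. You also write out the coefficient check $\lambda_j+\mu_j(1+\lambda_j)=0$, which the paper leaves implicit.
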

\begin{proof}
	By Lemma~\ref{lem:equiv_condition_for_pos_def_for_orthonormal_perturb}, $\det(\Gamma) = (1+\lambda_1) \cdots (1+\lambda_p)$. Since $\Gamma$ is non-singular, $1+\lambda_j \neq 0$ for all $j=1,\ldots, p$. Thus, the latter matrix is well-defined.
	A direct computation using orthonormality of $u_1, \ldots, u_p$ shows that
	$$
	\left( \Id_n + \sum_{j=1}^p \lambda_j u_j u_j^T \right)
	\cdot
	\left( \Id_n + \sum_{j=1}^p \frac{-\lambda_j}{1+\lambda_j} u_j u_j^T \right)
	=
	\Id_n,
	$$
	thus these two matrices are inverse to each other.
\end{proof}

\begin{proof}[Proof of Proposition~\ref{prop:faces_polars_cones_perturbation}]
\emph{Proof of (a).} Since $v_1, \ldots, v_n$ are linearly independent, there exist unique vectors $w_1, \ldots, w_n \in \R^n$ such that $\lan v_i, w_j \ran = -\delta_{ij}$ for all $i,j \in \{1,\ldots, n\}$. By conic duality,  $C^{\circ} = \pos(w_1, \ldots, w_n)$.
Let $V \in \R^{n \times n}$ and $W \in \R^{n \times n}$ have columns $v_1, \ldots, v_n$ and $w_1, \ldots, w_n$, respectively. The corresponding Gram matrices are $V^TV$ and $W^TW$. The defining relation for $w_1, \ldots, w_n$ gives $-W^T V = \Id_n$. Since $V$ is non-singular, it follows that $W = (-V^{-1})^T$, and hence
$$
\left( \langle w_i, w_j \rangle \right)_{i,j=1}^n
=
W^T W
=
(-V^{-1}) (-V^{-1})^T
=
(V^T V)^{-1}
=
\left( \left( \langle v_i, v_j \rangle \right)_{i,j=1}^n \right)^{-1}
=
\Gamma^{-1}.
$$
To complete the proof, note that Lemma~\ref{lem:inverse_of_matr_with_orthonormal_perturbation} yields
$$
\Gamma^{-1}
=
\Id_n + \sum_{j=1}^p \frac{-\lambda_j}{1+\lambda_j} u_j u_j^T.
$$

\vspace*{.2cm}
\noindent
\emph{Proof of (b).} Let $I \subseteq \{1, \ldots, n\}$ be non-empty. By writing the original Gram matrix as
$$
\left( \langle v_r, v_s \rangle \right)_{r,s=1}^n
=
\Id_n + \sum_{j=1}^p \lambda_j u_j u_j^T
=
\left( \delta_{rs} + \sum_{j=1}^p \lambda_j (u_j)_r (u_j)_s \right)_{r,s=1}^n,
$$
we can see that the collection $(v_i:i\in I)$ has Gram matrix
$$
\left( \langle v_r, v_s \rangle \right)_{r,s \in I}
=
\Id_{|I|} + \sum_{j=1}^p \lambda_j u_{j,I} u_{j,I}^T,
$$
where $u_{j,I} := ((u_j)_r: r\in I)$.
The matrix $\sum_{j=1}^p \lambda_j u_{j,I} u_{j,I}^T$ is real and symmetric, and has rank at most $\min(p, |I|)$.
Hence, by the spectral theorem, there exist orthonormal vectors
$
\widetilde{u}_1, \ldots, \widetilde{u}_{\min\{p,|I|\}} \in \R^{|I|}
$
and real numbers $\widetilde{\lambda}_1, \ldots, \widetilde{\lambda}_{\min\{p,|I|\}}$ such that
$$
\sum_{j=1}^p \lambda_j u_{j,I} u_{j,I}^T
=
\sum_{j=1}^{\min\{p,|I|\}} \widetilde{\lambda}_j \widetilde{u}_j \widetilde{u}_j^T,
$$
where some of the coefficients $\widetilde \lambda_j$ may be zero.

Finally, since the vectors $(v_i:i\in I)$ are linearly independent, their Gram matrix is positive definite. Hence,
$
\Id_{|I|}
+
\sum_{j=1}^{\min\{p,|I|\}} \widetilde{\lambda}_j \widetilde{u}_j \widetilde{u}_j^T
$
is positive definite, and Lemma~\ref{lem:equiv_condition_for_pos_def_for_orthonormal_perturb} implies that
$
\widetilde{\lambda}_j > -1
$
for all $j=1,\ldots,\min\{p,|I|\}$.
\end{proof}

\begin{remark}
For the polar cone, Proposition~\ref{prop:faces_polars_cones_perturbation} gives explicit parameters in~\eqref{eq:general_repr_polar_cone_Gram_matrix}: The orthonormal vectors $u_j$ remain unchanged, and each $\lambda_j$ is replaced by $-\lambda_j/(1+\lambda_j)$.

For a face of the cone indexed by $I \subseteq \{1, \ldots, n\}$, $I \neq \varnothing$, we can explicitly write the Gram matrix
$$
\left(\langle v_r,v_s\rangle\right)_{r,s\in I}
=
\Id_{|I|}
+
\sum_{j=1}^p \lambda_j u_{j,I}u_{j,I}^T,
$$
with $u_{j,I} := ((u_j)_r: r\in I)$, but these vectors are generally not orthonormal.
Obtaining the representation~\eqref{eq:general_repr_face_Gram_matrix} with orthonormal vectors requires diagonalizing the symmetric matrix
$
\sum_{j=1}^p \lambda_j u_{j,I}u_{j,I}^T.
$
The resulting parameters seem to not generally admit a simple expression in terms of the original cone.
\end{remark}

\section{Solid angle formula}\label{sec:solid_angle_formula}

Let $C\subseteq\R^n$ be a polyhedral cone of dimension $d \geq 1$. Its solid angle is
$$
\alpha(C)
:=
\frac{\lambda_d(C\cap\BB)}{\lambda_d(\BB)}
=
\P[Z\in C],
$$
where $\BB$ is the unit ball in $\lin(C)$ centered at the origin, $\lambda_d$ denotes $d$-dimensional Lebesgue measure on $\lin(C)$, and $Z$ is a standard Gaussian random vector in $\lin(C)$.
By convention $\alpha(\{0\})=1$.


In this section we consider an $n$-dimensional simplicial cone
$
C
=
\pos(v_1, \ldots, v_n)
\subseteq \R^n
$
and denote the Gram matrix of its generators $v_1, \ldots, v_n$ by $\Gamma$. The general formula for $\alpha(C)$ in~\eqref{eq:intro_general_solid_angle} (and Lemma~\ref{lem:solid_angle_simplicial_cone_general_explicit} below) is an $n$-dimensional integral. We shall see that in the case of cones introduced in Section~\ref{sec:class_of_cones}, this dimension reduces.

To state the result, we require the analytic continuation of the cumulative distribution function of the standard normal distribution, which we denote by
\begin{equation}\label{eq:compl_std_normal_distr}
	\Phi: \CC \rightarrow \CC, \quad \Phi(z) = \frac{1}{2}+\frac{1}{\sqrt{2 \pi}} \int_{0}^{z} \eee^{-x^2/2} \dd x.
\end{equation}
The integral is taken along any contour connecting $0$ to $z$. Let $\ii= \sqrt{-1}$.

\begin{theorem}\label{theo:solid_angle_orthonormal_perturb}
Let $n\in \N_0$ and let $v_1, \ldots, v_n \in \R^n$ be linearly independent vectors with Gram matrix
$$
\Gamma
=
\Id_n + \sum_{j=1}^p \lambda_j u_j u_j^T,
$$
where $p\in \{0,1,\ldots, n\}$, $\lambda_1, \ldots, \lambda_p \in \R$, and $u_1, \ldots, u_p\in \R^n$ are orthonormal. Then $\lambda_1, \ldots, \lambda_p>-1$ and the solid angle of the cone $C=\pos(v_1, \ldots, v_n)$ is given by
\begin{equation}\label{eq:solid_angle_with_orthonormal_perturbation}
\alpha(C)
=
\frac{1}{(2\pi)^{p/2}} \int_{\R^p} \eee^{-(y_1^2+ \ldots + y_p^2)/2} \prod_{r=1}^n \Phi \Bigg( \ii \sum_{j=1}^p \frac{\sqrt{\lambda_j}}{\sqrt{1+\lambda_j}} (u_j)_r y_j \Bigg)\, \dd y_1 \cdots \dd y_p.
\end{equation}
\end{theorem}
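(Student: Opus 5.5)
The plan is to start from the Gaussian integral representation \eqref{eq:intro_general_solid_angle},
$$
\alpha(C)=\frac{\sqrt{\det\Gamma}}{(2\pi)^{n/2}}\int_{(0,\infty)^n}\exp\Bigl(-\tfrac12 x^T\Gamma x\Bigr)\,\dd x ,
$$
and to decouple the quadratic form by a Hubbard--Stratonovich (Gaussian linearization) trick. The condition $\lambda_j>-1$ follows at once from Lemma~\ref{lem:equiv_condition_for_pos_def_for_orthonormal_perturb}, since $\Gamma$ is a Gram matrix of independent vectors and hence positive definite. That lemma also gives $\sqrt{\det\Gamma}=\prod_j\sqrt{1+\lambda_j}$.

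The quadratic form splits as
$$
x^T\Gamma x=\|x\|^2+\sum_{j=1}^p\lambda_j\langle u_j,x\rangle^2 .
$$
For each $j$ I use the identity
$$
\eee^{-\lambda_j t^2/2}=\frac{\sqrt{1+\lambda_j}}{\sqrt{2\pi}}\int_\R \eee^{-(1+\lambda_j)s^2/2}\,\eee^{-\lambda_j s^2/2}\,\eee^{\ii\lambda_j s t}\,\dots
$$
More cleanly, I would first prove the following formula for $a>0$ and real $t$:
$$
\eee^{-c t^2/2}=\frac{1}{\sqrt{2\pi}}\int_\R \eee^{-y^2/2}\,\eee^{\ii\sqrt{c}\,t y}\,\dd y ,\qquad c\geq 0 ,
$$
together with its analytic continuation to $c<0$, where $\sqrt c=\ii\sqrt{-c}$ turns it into a real Gaussian integral. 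The difficulty is that for $c<0$ this integral diverges when $|t|$ is large, as $\eee^{|c|t^2/2}$. To avoid this, I would split the quadratic form differently. Write $\mu_j:=\lambda_j/(1+\lambda_j)<1$. Then
$$
\Gamma=\Bigl(\Id_n-\sum_j\mu_ju_ju_j^T\Bigr)^{-1}
$$
by Lemma~\ref{lem:inverse_of_matr_with_orthonormal_perturbation}, so $\Gamma^{-1}=\Id_n-\sum_j\mu_ju_ju_j^T$.

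Next I switch to the probabilistic form $\alpha(C)=\P(Z\in(0,\infty)^n)$, where $Z\sim N(0,\Gamma^{-1})$. When all $\mu_j\ge0$ (that is, $\lambda_j\ge0$), this $Z$ can be realised as
$$
Z=G-\ii\cdot 0+\dots
$$
Concretely, $Z\overset{d}{=}X-\sum_j\sqrt{\mu_j}\,\ldots$. This representation fails because subtracting variance is not possible. The correct realisation works the other way round: $X=Z+\sum_j\sqrt{\mu_j}\,Y_ju_j$ is standard normal when $Z$ and $Y$ are independent. So instead I argue with characteristic functions and analytic continuation. Conditioning on $Y=y$ formally gives $Z=X-\sum_j\sqrt{\mu_j}y_ju_j$ with "negative variance". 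The resulting conditional orthant probability is $\prod_r\Phi\bigl(\sum_j\sqrt{\mu_j}(u_j)_ry_j\bigr)$ with the sign absorbed by symmetry of $y$. The variance sign flip is what produces the factor $\ii$: the target formula has argument $\ii\sqrt{\lambda_j/(1+\lambda_j)}=\sqrt{-\mu_j}$.

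The rigorous route I would take is the following. Define
$$
F(\mu_1,\dots,\mu_p):=\frac{1}{(2\pi)^{p/2}}\int_{\R^p}\eee^{-\|y\|^2/2}\prod_r\Phi\Bigl(\sum_j\sqrt{-\mu_j}\,(u_j)_ry_j\Bigr)\dd y .
$$
Take the region where all $\mu_j\le 0$, i.e.\ $\lambda_j\in(-1,0]$. There the arguments are real, and the formula is exactly the mixture statement $Z\overset{d}{=}X+\sum_j\sqrt{-\mu_j}\,y_ju_j$ conditioned on $Y=y$. This holds because then $\Gamma^{-1}=\Id_n+\sum_j(-\mu_j)u_ju_j^T$ is a genuine covariance sum. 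The formula is therefore proved in that region by Fubini and the independence of the coordinates of $X$.

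For the remaining case I use analytic continuation in each $\mu_j$ over the domain $\{\mu_j<1\}$. The left side, $\P(N(0,\Id_n-\sum\mu_ju_ju_j^T)\in(0,\infty)^n)$, written as the normalized integral of $\exp(-\tfrac12x^T\Gamma x)$ with $\Gamma$ from Lemma~\ref{lem:inverse_of_matr_with_orthonormal_perturbation}, is real-analytic in $(\mu_1,\dots,\mu_p)\in(-\infty,1)^p$. On the right side, $\Phi(\sqrt{-\mu}\,s)$ is an even function of $\sqrt{-\mu}$ only after symmetrization in $y$. Using the symmetry $y_j\mapsto-y_j$ and expanding the product, $F$ becomes a function of $-\mu_j$ rather than of $\sqrt{-\mu_j}$, and it is entire in each $\mu_j$ as long as the integral converges. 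The main obstacle is exactly this convergence and analyticity. Since $|\Phi(\ii t)|$ grows like $\eee^{t^2/2}/|t|$, the integrand is bounded by
$$
\exp\Bigl(-\tfrac12\|y\|^2+\tfrac12\sum_r\Bigl(\sum_j\sqrt{\mu_j}(u_j)_ry_j\Bigr)^2\Bigr)=\exp\Bigl(-\tfrac12\sum_j(1-\mu_j)y_j^2\Bigr)
$$
by orthonormality of the $u_j$. This bound is integrable locally uniformly for $\mu_j<1$, which justifies differentiating under the integral sign and hence holomorphy on a complex neighbourhood of $(-\infty,1)^p$. The identity theorem, applied one variable at a time, then extends the equality from $(-\infty,0]^p$ to $(-\infty,1)^p$. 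Substituting $\sqrt{-\mu_j}=\ii\sqrt{\lambda_j}/\sqrt{1+\lambda_j}$, with the stated convention when $\lambda_j<0$, gives \eqref{eq:solid_angle_with_orthonormal_perturbation}.
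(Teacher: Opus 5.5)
Your final argument (mixture formula for $\lambda_j\in(-1,0]$, then analytic continuation in the parameters) is correct in outline, but it takes a genuinely different route from the paper's.

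The paper does not continue in the parameters at all. It inserts~\eqref{eq:Hubbard_Stratonovich} with $z=\ii\sqrt{\lambda_j}\langle u_j,x\rangle$ directly into~\eqref{eq:solid_angle_simplicial_cone_general_explicit}, for all signs of $\lambda_j$ at once. It then checks absolute integrability of the joint integrand: integrate $|f|$ over $x$ first using~\eqref{eq:half_line_Gaussian_integral} at a real argument, and orthonormality gives the majorant $\exp(-\frac12\sum_{\lambda_j<0}(1+\lambda_j)t_j^2-\frac12\sum_{\lambda_j\geq0}t_j^2)$. Finally it applies Fubini, evaluates each $x_r$-integral by~\eqref{eq:half_line_Gaussian_integral}, and substitutes $y_j=\sqrt{1+\lambda_j}\,t_j$.

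So the obstacle that made you abandon this route does not exist. For $c<0$ the linearization integral converges for every $t$. Its value $\eee^{|c|t^2/2}$ merely grows in $t=\langle u_j,x\rangle$, and after Fubini this growth is beaten by $\eee^{-\|x\|^2/2}$ precisely because $-\lambda_j<1$.

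Your base case is exactly the paper's Remark following the proof (Marsaglia's reduction). Your route buys a conceptual explanation of the factor $\ii$ as a ``negative variance''. It costs more work: you must prove analyticity of both sides in the parameters, handle the branch of $\sqrt{-\mu_j}$ at $\mu_j=0$, and control $\Phi$ at genuinely complex arguments. The paper needs only one Fubini check. Both proofs hinge on the same orthonormality identity $\sum_r(\sum_j c_j(u_j)_ry_j)^2=\sum_jc_j^2y_j^2$.

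Two points in your continuation step need tightening.

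\emph{The majorant.} Your displayed bound is valid only for real $\mu\in[0,1)^p$. For $\mu_j<0$ the factor must be $\eee^{-y_j^2/2}$, not $\eee^{-(1-\mu_j)y_j^2/2}$. Holomorphy on a complex neighbourhood also needs a bound at complex arguments. Use $|\Phi(z)|\leq C\eee^{(\operatorname{Im}z)^2/2}$ for all $z\in\CC$, obtained by integrating along $0\to\operatorname{Re}z\to z$. With $w_j:=\sqrt{-\mu_j}$ this gives the locally uniform majorant $C^n\exp(-\frac12\sum_j(1-(\operatorname{Im}w_j)^2)y_j^2)$.

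\emph{The evenness step.} You can drop the symmetrization entirely by continuing in $w=(w_1,\ldots,w_p)$ on the strip $\{|\operatorname{Im}w_j|<1\}^p$.
\begin{itemize}
\item The right-hand side is holomorphic there by the bound above.
\item The left-hand side, i.e.\ \eqref{eq:solid_angle_simplicial_cone_general_explicit} with $\lambda_j=(1+w_j^2)^{-1}-1$, is holomorphic there. Indeed $1+\lambda_j=(1+w_j^2)^{-1}$ has positive real part, so $\operatorname{Re}\lambda_j>-1$ and the integrand is dominated.
\item Both sides agree for $w\in\R^p$ by your mixture argument.
\item The strip contains $w_j=\ii\sqrt{\lambda_j}/\sqrt{1+\lambda_j}$ for every $\lambda_j>-1$. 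Choosing exactly this $w_j$ also removes the sign bookkeeping.
\end{itemize}

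Finally, delete the abandoned attempts in your first paragraphs.
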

Here, we use the convention $\sqrt{x}=\ii \sqrt{-x}$ for $x<0$. Empty sums and products are understood to be $0$ and $1$, respectively, and integration over $\R^0$ means evaluation of the integrand at its unique point.

The idea of the proof is as follows. In Lemma~\ref{lem:solid_angle_simplicial_cone_general_explicit} we recall a general $n$-dimensional integral representation for the solid angle of a simplicial cone. After plugging in
$$
x^T \Gamma x
=
\|x\|^2 + \sum_{j=1}^p \lambda_j \lan u_j,x \ran^2,
$$
we can apply the Hubbard--Stratonovich identity~\eqref{eq:Hubbard_Stratonovich} $p$ times in this integral representation. This separates and simplifies the integrals with respect to the original integration variables but at the same time yields $p$ new integration variables.
The integrals with respect to the original variables can be evaluated in terms of $\Phi$ using~\eqref{eq:half_line_Gaussian_integral}. A difficulty lies in justifying the interchange of integrals. Here the orthonormality of $u_1,\ldots,u_p$ and the inequalities $\lambda_j>-1$ are applied.

\begin{lemma}\label{lem:solid_angle_simplicial_cone_general_explicit}
	Let $v_1,\ldots,v_n\in\R^n$ be linearly independent, let $\Gamma = (\lan v_i, v_j \ran)_{i,j=1}^n$ denote their Gram matrix, and put $C=\pos(v_1,\ldots,v_n)$. Then
	\begin{equation}\label{eq:solid_angle_simplicial_cone_general_explicit}
		\alpha(C)
		=
		\frac{\sqrt{\det\Gamma}}{(2\pi)^{n/2}} \int_{(0,\infty)^n} \exp\left(-\frac12 x^T\Gamma x \right) \,\dd x.
	\end{equation}
\end{lemma}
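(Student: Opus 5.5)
The plan is to compute $\alpha(C)=\P[Z\in C]$ for a standard Gaussian vector $Z$ in $\R^n$, using the linear change of variables $z=Vx$. Here $V\in\R^{n\times n}$ is the non-singular matrix with columns $v_1,\ldots,v_n$. By linear independence, $C=\{Vx: x\in[0,\infty)^n\}$, and the map $x\mapsto Vx$ is a bijection of $\R^n$. The boundary of the orthant has Lebesgue measure zero, so we may replace $[0,\infty)^n$ by $(0,\infty)^n$.

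First I write
$$
\alpha(C)=\frac{1}{(2\pi)^{n/2}}\int_{C}\eee^{-\|z\|^2/2}\,\dd z .
$$
Substituting $z=Vx$ gives $\dd z=|\det V|\,\dd x$. Moreover $\|Vx\|^2=x^TV^TVx=x^T\Gamma x$, since $\Gamma=V^TV$ is the Gram matrix.

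Finally, $\det\Gamma=\det(V^T)\det(V)=(\det V)^2$, so $|\det V|=\sqrt{\det\Gamma}$. Combining these yields~\eqref{eq:solid_angle_simplicial_cone_general_explicit}. Positive definiteness of $\Gamma$ guarantees $\det\Gamma>0$ and integrability.

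The only point needing care is the identification $C=V[0,\infty)^n$ together with the null boundary. Both are immediate from the definition of $\pos$ and the invertibility of $V$. I expect no real obstacle here.

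The alternative route via the Gaussian vector with covariance $\Gamma^{-1}$, as in the introduction, is equivalent but less direct. I would use the change of variables instead.
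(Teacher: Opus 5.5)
Your proof is correct, and it takes a more direct route than the paper's. The paper first introduces the dual generators $w_1,\ldots,w_n$ with $\lan v_i,w_j\ran=-\delta_{ij}$, so that $C^\circ=\pos(w_1,\ldots,w_n)$ with Gram matrix $\Gamma^{-1}$. It then uses the double polar theorem to rewrite $\{Z\in C\}$ as $\{\lan Z,w_i\ran\le 0 \text{ for all } i\}$ and flips the signs using the symmetry of $Z$. Finally it quotes the density of the centered Gaussian vector $Y=(\lan Z,w_i\ran)_{i=1}^n$, whose covariance is $\Gamma^{-1}$.

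The matrix with columns $w_1,\ldots,w_n$ is $(-V^{-1})^T$, so $Y=-V^{-1}Z$. The paper's argument is therefore your substitution $x=V^{-1}z$ in disguise, with the Jacobian $|\det V|=\sqrt{\det\Gamma}$ contained in the Gaussian density formula.

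Your version needs no polarity, no double polar theorem and no sign symmetry, and it is fully self-contained. The paper's version buys the probabilistic statement $\alpha(C)=\P[Y\in(0,\infty)^n]$ for $Y$ centered Gaussian with covariance $\Gamma^{-1}$. This is what the paper later reuses in the remark giving a probabilistic proof via Marsaglia's reduction. It also recycles the polar-cone Gram matrix computation from Proposition~\ref{prop:faces_polars_cones_perturbation}.
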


The result is proved in \cite[Proposition~5.1]{kabluchko_zaporozhets_gauss_simplex}. We include a proof for completeness.

\begin{proof}[Proof of Lemma~\ref{lem:solid_angle_simplicial_cone_general_explicit}]
Because of the linear independence of $v_1, \ldots, v_n$, there exist unique vectors $w_1, \ldots, w_n \in \R^n$ such that $\lan v_i, w_j \ran = -\delta_{ij}$ for all $i,j \in \{1, \ldots, n\}$. Then the polar cone of $C=\pos(v_1, \ldots, v_n)$ is given by $C^{\circ} = \pos(w_1, \ldots, w_n)$. As in the proof of Proposition~\ref{prop:faces_polars_cones_perturbation}, the Gram matrix of $w_1, \ldots, w_n$ is given by $(\lan w_i, w_j \ran)_{i,j=1}^n = \Gamma^{-1}$.

Let $Z$ be a standard Gaussian random vector in $\R^n$. By the definition of the solid angle and the double polar theorem,
\begin{align*}
\alpha(C)
&=
\P[Z \in C]
=
\P[\lan Z,w \ran \leq 0 \text{ for all } w\in C^{\circ}]
=
\P[\lan Z, w_1 \ran \leq 0, \ldots, \lan Z, w_n \ran \leq 0]\\
&=
\P[\lan Z, w_1 \ran \geq 0, \ldots, \lan Z, w_n \ran \geq 0].
\end{align*}
The random vector $Y := (\lan Z, w_1\ran, \ldots, \lan Z, w_n \ran)$ is Gaussian distributed with mean zero and covariance matrix
$$
\left( \Cov\left( \lan Z, w_i \ran, \lan Z, w_j \ran \right) \right)_{i,j=1}^n
=
\left( \lan w_i, w_j \ran \right)_{i,j=1}^n
=
\Gamma^{-1}.
$$
Hence $Y$ has density
$$
f_Y(x)
=
\frac{1}{(2\pi)^{n/2} \sqrt{\det(\Gamma^{-1})}} \exp \left(-\frac12 x^T\Gamma x \right)
=
\frac{\sqrt{\det\Gamma}}{(2\pi)^{n/2}} \exp \left(-\frac12 x^T \Gamma x \right),
\qquad
x\in \R^n.
$$
Thus,
$$
\alpha(C)
=
\P[Y\in (0,\infty)^n]
=
\frac{\sqrt{\det \Gamma}}{(2\pi)^{n/2}} \int_{(0,\infty)^n} \exp\left(-\frac12 x^T\Gamma x \right)\,\dd x,
$$
which proves~\eqref{eq:solid_angle_simplicial_cone_general_explicit}.
\end{proof}

\begin{lemma}\label{lem:Gaussian_integral_identities_for_solid_angle}
For every $z\in\CC$ we have
\begin{equation}\label{eq:Hubbard_Stratonovich}
\frac{1}{\sqrt{2\pi}}
\int_{-\infty}^{\infty}
\eee^{-t^2/2} \eee^{tz}
\,\dd t
=
\eee^{z^2/2},
\end{equation}
as well as
\begin{equation}\label{eq:half_line_Gaussian_integral}
\int_0^\infty
\exp\left(-\frac{x^2}2+zx\right)
\,\dd x
=
\sqrt{2\pi}
\eee^{z^2/2} \Phi\left(z\right).
\end{equation}
\end{lemma}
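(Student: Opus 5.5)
Both identities are entire-function identities in $z$, so I would prove each first for real $z$ by completing the square and then pass to all of $\CC$ by the identity theorem for holomorphic functions.

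\emph{Identity \eqref{eq:Hubbard_Stratonovich}.} For real $z$, write $-t^2/2+tz=-(t-z)^2/2+z^2/2$. Substituting $s=t-z$ and using $\int_\R \eee^{-s^2/2}\,\dd s=\sqrt{2\pi}$ gives the claim.

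To extend to complex $z$, I would show that the left-hand side $F(z)$ is entire. On $\{|z|\le R\}$ the integrand is bounded by $\eee^{-t^2/2+R|t|}$, which is integrable and independent of $z$. Hence differentiation under the integral sign (or Morera's theorem combined with Fubini) shows that $F$ is holomorphic. The right-hand side $\eee^{z^2/2}$ is also entire. The two functions agree on $\R$, which has accumulation points, so they coincide on $\CC$.

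\emph{Identity \eqref{eq:half_line_Gaussian_integral}.} For real $z$, complete the square in the same way:
$$
\int_0^\infty \eee^{-x^2/2+zx}\,\dd x=\eee^{z^2/2}\int_{-z}^{\infty}\eee^{-s^2/2}\,\dd s .
$$
By symmetry of the Gaussian density,
$$
\int_{-z}^{\infty}\eee^{-s^2/2}\,\dd s=\sqrt{2\pi}\Bigl(\tfrac12+\tfrac1{\sqrt{2\pi}}\int_0^{z}\eee^{-s^2/2}\,\dd s\Bigr)=\sqrt{2\pi}\,\Phi(z),
$$
where $\Phi$ is the function in \eqref{eq:compl_std_normal_distr}.

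For complex $z$, I would check that both sides are entire. The left-hand side is entire by the same domination argument as above, with bound $\eee^{-x^2/2+Rx}$ on $[0,\infty)$. The function $\Phi$ is entire because it is a primitive of the entire function $\eee^{-x^2/2}$; in particular the contour integral in \eqref{eq:compl_std_normal_distr} is path-independent. Therefore $\eee^{z^2/2}\Phi(z)$ is entire, and the identity theorem gives equality on $\CC$.

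The only step needing care is the analytic extension: one must supply a locally uniform integrable majorant to justify holomorphy of the parameter integrals. This is routine here.
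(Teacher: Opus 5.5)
Your proposal is correct and follows the paper's proof: complete the square for real $z$ (using $1-\Phi(-z)=\Phi(z)$, which is your symmetry step), then extend to $\CC$ by the identity theorem. The only difference is that you also justify why the parameter integrals are entire, via the locally uniform majorant $\eee^{-t^2/2+R|t|}$. The paper asserts this without proof, so your version is slightly more complete.
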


\begin{proof}
For real $z$, completing the square gives
\begin{align*}
\frac{1}{\sqrt{2\pi}} \int_{-\infty}^{\infty} \eee^{-t^2/2+zt} \,\dd t
&=
\frac{\eee^{z^2/2}}{\sqrt{2\pi}} \int_{-\infty}^{\infty} \eee^{-(t-z)^2/2}\,\dd t
=
\eee^{z^2/2},\\
\int_0^\infty \eee^{-x^2/2+zx} \,\dd x
&=
\eee^{z^2/2} \int_{-z}^\infty \eee^{-s^2/2} \, \dd s
=
\sqrt{2\pi} \eee^{z^2/2} \Phi(z),
\end{align*}
where the last equality uses $1-\Phi(-z)=\Phi(z)$.
Since both sides of each identity are entire functions of $z$, the identities extend to every $z\in\CC$ by the identity theorem for analytic functions.
\end{proof}

\begin{proof}[Proof of Theorem~\ref{theo:solid_angle_orthonormal_perturb}]
For $p=0$, we have $\Gamma=\Id_n$, and both sides of~\eqref{eq:solid_angle_with_orthonormal_perturbation} equal $2^{-n}$. This includes $n=0$ by the convention $\alpha(\{0\})=1$. We may therefore assume $p\geq1$.

First note that $\Gamma$ is positive definite as the Gram matrix of linearly independent vectors. Hence, Lemma~\ref{lem:equiv_condition_for_pos_def_for_orthonormal_perturb} yields  $\lambda_1, \ldots, \lambda_p>-1$.
Moreover, for $x\in \R^n$,
$$
x^T \Gamma x
=
x^T x + \sum_{j=1}^p \lambda_j x^T u_j u_j^T x
=
\sum_{r=1}^n x_r^2 + \sum_{j=1}^p \lambda_j \left( \sum_{r=1}^n (u_j)_r x_r \right)^2.
$$
Therefore Lemma~\ref{lem:solid_angle_simplicial_cone_general_explicit} yields
$$
\alpha(C)
=
\frac{\sqrt{\det\Gamma}}{(2\pi)^{n/2}} \int_{(0,\infty)^n} \exp\left(-\frac12 \sum_{r=1}^n x_r^2 \right) \prod_{j=1}^p \exp\Bigg( -\frac{\lambda_j}{2} \left( \sum_{r=1}^n (u_j)_r x_r \right)^2 \Bigg)  \,\dd x.
$$
For every fixed $x\in (0,\infty)^n$, the identity~\eqref{eq:Hubbard_Stratonovich} with $z=\ii \sqrt{\lambda_j} \sum_{r=1}^n (u_j)_r x_r$ gives
$$
\prod_{j=1}^p \exp\Bigg( -\frac{\lambda_j}{2} \left( \sum_{r=1}^n (u_j)_r x_r \right)^2 \Bigg)
=
\frac{1}{(2\pi)^{p/2}} \int_{\R^p} \exp\Bigg( -\frac12 \sum_{j=1}^p t_j^2 + \ii \sum_{j=1}^p \sqrt{\lambda_j} t_j \sum_{r=1}^n (u_j)_r x_r \Bigg) \, \dd t.
$$
Hence,
\begin{equation}\label{eq:pf_solid_angle_orthonormal_perturbation_intermediate}
\alpha(C)
=
\frac{\sqrt{\det\Gamma}}{(2 \pi)^{(n+p)/2}} \int_{(0,\infty)^n} \int_{\R^p} \underbrace{ \exp \Bigg(-\frac12 \sum_{j=1}^p t_j^2 \Bigg) \prod_{r=1}^n \exp \Bigg( -\frac{x_r^2}{2} + \ii x_r \sum_{j=1}^p \sqrt{\lambda_j}(u_j)_r t_j \Bigg)}_{=:f(x,t)} \,\dd t \,\dd x.
\end{equation}
Next, let us justify that Fubini's theorem is applicable, by showing absolute integrability of $f$. By convention for the square root,
\begin{align*}
|f(x,t)|
=
\exp \Bigg( -\frac12 \sum_{r=1}^n x_r^2 - \frac12 \sum_{j=1}^p t_j^2 -\sum_{\lambda_j<0} \sqrt{-\lambda_j} t_j \sum_{r=1}^n (u_j)_r x_r  \Bigg).
\end{align*}
The identity~\eqref{eq:half_line_Gaussian_integral} with
$z=-\sum_{\lambda_j<0}\sqrt{-\lambda_j} (u_j)_r t_j$ yields
\begin{align*}
&\int_{\R^p} \int_{(0,\infty)^n} |f(x,t)| \, \dd x \, \dd t\\
&\qquad =
\int_{\R^p} \exp\Bigg( -\frac12 \sum_{j=1}^p t_j^2 \Bigg) \cdot \prod_{r=1}^n \Bigg( \int_0^{\infty} \exp\Bigg( -\frac{x_r^2}2 -x_r \sum_{\lambda_j<0} \sqrt{-\lambda_j} (u_j)_r t_j \Bigg) \dd x_r \Bigg) \dd t
\\
&\qquad =
(2\pi)^{n/2} \int_{\R^p} \exp \Bigg( -\frac12\sum_{j=1}^p t_j^2 + \frac12\sum_{r=1}^n \bigg( \sum_{\lambda_j<0} \sqrt{-\lambda_j} (u_j)_r t_j \bigg)^2 \Bigg) \prod_{r=1}^n \Phi \bigg( -\sum_{\lambda_j<0} \sqrt{-\lambda_j} (u_j)_r t_j \bigg) \,\dd t.
\end{align*}
Since $u_1,\ldots,u_p$ are orthonormal,
$$
\sum_{r=1}^n \Bigg( \sum_{\lambda_j<0} \sqrt{-\lambda_j} (u_j)_r t_j \Bigg)^2
=
\sum_{\substack{\lambda_j<0\\ \lambda_k<0}} \sqrt{-\lambda_j} \sqrt{-\lambda_k} t_j t_k \sum_{r=1}^n (u_j)_r (u_k)_r
=
\sum_{\lambda_j<0}(-\lambda_j) t_j^2.
$$
Moreover, all arguments of $\Phi$ are real, so $0\leq\Phi\leq1$. It follows that
$$
\int_{\R^p} \int_{(0,\infty)^n} |f(x,t)| \, \dd x \, \dd t
\leq
(2\pi)^{n/2} \int_{\R^p} \exp \Bigg( -\frac12 \sum_{\lambda_j<0} (1+\lambda_j) t_j^2 -\frac12 \sum_{\lambda_j \geq 0} t_j^2 \Bigg) \,\dd t
<
\infty.
$$
This integral is finite since $\lambda_j>-1$ for every $j$.

Thus $f$ is absolutely integrable and Fubini's theorem is applicable in~\eqref{eq:pf_solid_angle_orthonormal_perturbation_intermediate}. It follows that
\begin{align*}
\alpha(C)
=
\frac{\sqrt{\det\Gamma}}{(2\pi)^{(n+p)/2}} \int_{\R^p} \exp \Bigg(-\frac12 \sum_{j=1}^p t_j^2 \Bigg) \cdot \prod_{r=1}^n \Bigg( \int_0^\infty \exp \Bigg( -\frac{x_r^2}{2} +\ii x_r \sum_{j=1}^p \sqrt{\lambda_j} (u_j)_r t_j \Bigg) \,\dd x_r \Bigg) \,\dd t\\
=\frac{\sqrt{\det\Gamma}}{(2\pi)^{p/2}} \int_{\R^p} \exp \Bigg( -\frac12 \sum_{j=1}^p t_j^2 \Bigg) \prod_{r=1}^n \exp \Bigg( -\frac12 \Bigg( \sum_{j=1}^p \sqrt{\lambda_j} (u_j)_r t_j \Bigg)^2 \Bigg)
\cdot \prod_{r=1}^n \Phi \Bigg( \ii\sum_{j=1}^p \sqrt{\lambda_j}(u_j)_r t_j \Bigg) \dd t.
\end{align*}
In the second step, we here applied~\eqref{eq:half_line_Gaussian_integral} with $z= \ii\sum_{j=1}^p\sqrt{\lambda_j}(u_j)_r t_j$ to each of the inner integrals.
Since $u_1,\ldots,u_p$ are orthonormal,
\begin{align*}
\sum_{r=1}^n \Bigg(\sum_{j=1}^p \sqrt{\lambda_j}(u_j)_r t_j \Bigg)^2
&=
\sum_{j,k=1}^p \sqrt{\lambda_j} \sqrt{\lambda_k} t_j t_k \sum_{r=1}^n (u_j)_r (u_k)_r
=
\sum_{j=1}^p \lambda_j t_j^2.
\end{align*}
Therefore,
\begin{align*}
\alpha(C)
=
\frac{\sqrt{\det\Gamma}}{(2\pi)^{p/2}} \int_{\R^p}	\exp \Bigg( -\frac12 \sum_{j=1}^p (1+\lambda_j) t_j^2 \Bigg) \prod_{r=1}^n \Phi\Bigg( \ii \sum_{j=1}^p \sqrt{\lambda_j}(u_j)_r t_j \Bigg) \,\dd t.
\end{align*}
Lemma~\ref{lem:equiv_condition_for_pos_def_for_orthonormal_perturb} says $\det(\Gamma) = (1+\lambda_1) \cdots (1+\lambda_p)$. Substituting $y_j = \sqrt{1+\lambda_j} t_j$ for all $j=1,\ldots, p$ completes the proof.
\end{proof}

\begin{remark}[A probabilistic proof in a special case]
If $\lambda_j\in(-1,0]$ for every $j$, the solid-angle formula~\eqref{eq:solid_angle_with_orthonormal_perturbation} has a more direct probabilistic proof and is a special case of the Gaussian reduction formula in~\cite[Equation~(3)]{Marsaglia63}; see also~\cite[Section~2.3.1]{genz_bretz_book_multivar_normal_t}.
Indeed, as in the proof of Lemma~\ref{lem:solid_angle_simplicial_cone_general_explicit}, we have $\alpha(C)=\P[Z \in (0,\infty)^n]$, where $Z$ is a centered Gaussian vector with covariance matrix
$$
\Gamma^{-1}
=
\Id_n+\sum_{j=1}^p \frac{-\lambda_j}{1+\lambda_j}u_ju_j^T.
$$
Since all coefficients $-\lambda_j/(1+\lambda_j)$ are nonnegative, the matrix $\sum_{j=1}^p \frac{-\lambda_j}{1+\lambda_j}u_ju_j^T$ is positive semidefinite. Consequently, $Z$ has the same distribution as $X+Y$, where $X,Y$ are independent centered Gaussian vectors with covariance matrices $\Id_n$ and $\sum_{j=1}^p \frac{-\lambda_j}{1+\lambda_j} u_ju_j^T$, respectively. Conditioning on $Y$ gives
$$
\alpha(C)
=
\P[X+Y \in (0,\infty)^n]
=
\E\left[\P\left[X_r>-Y_r \text{ for all } r \in \{1,\ldots,n\} \,|\, Y \right]\right]
=
\E\left[\prod_{r=1}^n\Phi(Y_r)\right].
$$
The last equality uses that the $X_r$ are independent standard normal variables, that $X$ is independent of $Y$, and that $1-\Phi(-y)=\Phi(y)$.

Let $G_1,\ldots,G_p$ be independent standard Gaussian random variables. Then $Y$ has the same distribution as
$
\sum_{j=1}^p
\sqrt{\frac{-\lambda_j}{1+\lambda_j}}\,u_jG_j.
$
Putting this expression for $Y$ into the expectation and using that $(G_1,\ldots,G_p)$ and $(-G_1,\ldots,-G_p)$ have the same distribution gives~\eqref{eq:solid_angle_with_orthonormal_perturbation}.
\end{remark}

\begin{example}[Orthocentric cones]\label{ex:orthoc_cones}
Let $n\in\N$, let $\mu_0, \ldots, \mu_n \in \R \bsl \{0\}$, and let $\eps_1,\ldots,\eps_n\in\{\pm1\}$. Suppose that there are linearly independent vectors $\widetilde{v}_1, \ldots, \widetilde{v}_n \in \R^n$ satisfying
$$
\left\langle \widetilde{v}_i, \widetilde{v}_j \right\rangle
=
\frac{1}{\mu_0} + \frac{\delta_{ij}}{\mu_j},
\qquad
i,j \in \{1, \ldots, n\}.
$$
Then, consider the simplicial cone $C=\pos(\eps_1\widetilde v_1,\ldots,\eps_n\widetilde v_n)$. Following~\cite[Definition~4.1]{kabluchko_schange_angles_orthocentric_simplices},
such a cone is called an orthocentric cone.

We now restrict to the subclass with $\mu_1,\ldots,\mu_n>0$. This includes the pointed parts of the tangent and normal cones of acute orthocentric simplices, in particular of regular simplices; see~\cite[Theorem~5.4]{kabluchko_schange_angles_orthocentric_simplices}. Theorem~\ref{theo:solid_angle_orthonormal_perturb} provides the following alternative derivation of the solid-angle formula for this subclass obtained in~\cite[Corollary~7.6]{kabluchko_schange_angles_orthocentric_simplices}.
Indeed, set
$$
\lambda = \frac{\mu_1 + \ldots + \mu_n}{\mu_0},
\qquad
u = \frac{1}{\sqrt{\mu_1 + \ldots + \mu_n}} 
\begin{pmatrix}
\eps_1 \sqrt{\mu_1}\\
\vdots\\
\eps_n \sqrt{\mu_n}
\end{pmatrix}.
$$
Define $v_1:=\eps_1\sqrt{\mu_1}\,\widetilde v_1, \ldots, v_n:=\eps_n\sqrt{\mu_n}\,\widetilde v_n$. Then $C=\pos(v_1,\ldots,v_n)$. Moreover,
\[
\langle v_i,v_j\rangle
=
\delta_{ij}
+\frac{\eps_i\eps_j\sqrt{\mu_i\mu_j}}{\mu_0}
=
\delta_{ij}+\lambda u_i u_j,
\qquad
i,j \in \{1, \ldots, n\}.
\]
Thus the Gram matrix of these generators is
$
\Gamma
:=
(\lan v_i, v_j \ran)_{i,j=1}^n
=
\Id_n+\lambda uu^T.
$
Since the generators are linearly independent, $\Gamma$
is positive definite. Since $\|u\|=1$, Lemma~\ref{lem:equiv_condition_for_pos_def_for_orthonormal_perturb} therefore yields
$$
1+\lambda
=
\frac{\mu_0+\mu_1+\ldots+\mu_n}{\mu_0}>0.
$$
In particular, either $\mu_0>0$ or
$\mu_0<-\mu_1-\cdots-\mu_n$.
Theorem~\ref{theo:solid_angle_orthonormal_perturb} applied to the generators $v_1, \ldots, v_n$ gives
\begin{align*}
	\alpha(C)
	&=
	\frac{1}{\sqrt{2\pi}} \int_{-\infty}^{\infty} \eee^{-y^2/2} \prod_{r=1}^n \Phi\Bigg( \ii\frac{\sqrt{\lambda}}{\sqrt{1+\lambda}} u_r y \Bigg) \,\dd y\\
	&=
	\frac{1}{\sqrt{2\pi}} \int_{-\infty}^{\infty} \eee^{-y^2/2} \prod_{r=1}^n \Phi\Bigg( \eps_r \ii \sqrt{\frac{\mu_r} {\mu_0+\mu_1+\ldots+\mu_n}} \,y \Bigg) \,\dd y.
\end{align*}
In the final equality we used that
$
\frac{\sqrt{\lambda}}{\sqrt{1+\lambda}}u_r
=
\eps_r \sqrt{\frac{\mu_r}{\mu_0+\mu_1+\ldots+\mu_n}}
$
holds both in the case $\mu_0>0$ and in the case $\mu_0<-\mu_1-\ldots-\mu_n$. This recovers the solid-angle formula for this subclass of orthocentric cones given in~\cite[Corollary~7.6]{kabluchko_schange_angles_orthocentric_simplices}.
\end{example}

\section{Expected number of faces in random shifted sections of acute orthocentric simplices and crosspolytopes}

In this section, we apply Theorem~\ref{theo:solid_angle_orthonormal_perturb} to derive explicit formulas for the expected face numbers of random shifted sections of acute orthocentric simplices and asymmetric crosspolytopes.

Let $n\geq 2$, and let $P\subseteq\R^n$ be an $n$-dimensional polytope. Denote the set of its $k$-dimensional faces by $\mathcal F_k(P)$ and put $f_k(P):=|\mathcal F_k(P)|$. By convention, faces are non-empty. We set $f_k(\varnothing)=0$.

Fix $x\in\R^n$ such that $x\notin\aff(F)$ for every proper face $F$ of $P$. For $\ell\in\{1,\ldots,n-1\}$, let $L$ be uniformly distributed on the Grassmannian of $(n-\ell)$-dimensional linear subspaces of $\R^n$. An extension of~\cite[Proposition~4.1]{KabluchkoSeidel2022}, stated in Proposition~\ref{prop:face_number_identity} below, gives
$$
\E f_j(P\cap(x+L))
=
\sum_{F\in\mathcal F_{j+\ell}(P)}
\P((F-x)\cap L\neq\varnothing),
\qquad
j\in\{0,\ldots,n-\ell-1\}.
$$

For both classes of polytopes considered here, every proper face $F$ is a simplex. Together with the assumption on $x$, this ensures that $\pos(F-x)$ is a simplicial cone of dimension $\dim(F)+1$. Moreover, the events
$$
(F-x)\cap L\neq\varnothing
\qquad\text{and}\qquad
L\cap\pos(F-x)\neq\{0\}
$$
are equivalent.

The conic Crofton formula therefore expresses the probabilities above in terms of the conic intrinsic volumes of these cones. These conic intrinsic volumes can be computed from the solid angles of certain cones associated with $\pos(F-x)$. For a suitable choice of generators, the corresponding Gram matrices differ from the identity matrix by a matrix of rank at most two. Theorem~\ref{theo:solid_angle_orthonormal_perturb} then expresses the required solid angles as integrals of dimension at most two.

\subsection{Conic intrinsic volumes}
We first recall facts about conic intrinsic volumes.
For a convex set $A$, write $\relint(A)$ for its relative interior, that is, its interior in its affine hull $\aff(A)$.

\begin{definition}[Normal cone, conic intrinsic volumes]
Let $C \subseteq \R^n$ be a polyhedral cone.
For a face $F$ of $C$, choose $x_0 \in \relint(F)$ and define the normal cone of $C$ at $F$ as
$$
N(F,C)
:=
\left\{ z \in \R^n: \lan z, y-x_0 \ran \leq 0 \text{ for every } y\in C \right\}.
$$
This definition does not depend on the choice of $x_0$. 

The $k$-th conic intrinsic volume of $C$ is
\begin{equation}\label{eq:conic_intr_volume_general}
\upsilon_k(C)
:=
\sum_{F \in \mathcal F_k(C)} \alpha(F) \alpha\left(N(F,C)\right),
\qquad
k\in\{0, \ldots, n \}.
\end{equation}
Here $\mathcal F_k(C)$ denotes the set of $k$-dimensional faces of $C$. Further set $\upsilon_k(C):=0$ for all other $k\in \ZZ$.
\end{definition}

For $\ell \in \{0, \ldots, n\}$, let $L$ be  uniformly distributed on the Grassmannian of all $(n-\ell)$-dimensional subspaces of $\R^n$. The conic Crofton formula says that every polyhedral cone $C \subseteq \R^n$ that is not a linear subspace satisfies
\begin{equation}\label{eq:conic_crofton}
	\P(C \cap L \neq \{0\})
	=
	2(\upsilon_{\ell+1}(C) + \upsilon_{\ell+3}(C) + \ldots).
\end{equation}
For more information on conic intrinsic volumes, we refer to~\cite[Chapter 2.3]{SchneiderConvexCones} and~\cite{AmelunxenLotzDCG17, amelunxen_lotz_mccoy_tropp_living_on_the_edge}.

\subsection{Conic intrinsic volumes of cones over acute orthocentric simplices}\label{sec:cones_over_ortho_simpl}

We now compute the conic intrinsic volumes of simplicial cones in $\R^n$ admitting $d+1$ linearly independent generators whose Gram matrix has the form 
$\Id_{d+1} - tr^T - rt^T$. Here, $d \in \{0, \ldots, n-1\}$, $t \in (0,\infty)^{d+1}$, and $r\in \R^{d+1}$. This form occurs, after rescaling the generators, for the cones over shifted proper faces of acute orthocentric simplices and asymmetric crosspolytopes considered in the next subsection.

The following remark describes a geometric construction of cones with this Gram matrix.

\begin{remark}[Cones over acute orthocentric simplices]
Let $0 \leq d \leq n-1$, let $\tau_0, \ldots, \tau_d>0$ and put $s=\tau_0^2 + \ldots + \tau_d^2$. Consider the simplex $\conv(y_0, \ldots, y_d)$, where the vectors $y_0, \ldots, y_d \in \R^n$ satisfy
\begin{equation*}
\langle y_j, y_k \rangle
=
-\frac1s,
\quad
0 \leq j \neq k \leq d,
\qquad
\|y_j\|^2
=
-\frac1s + \frac1{\tau_j^2},
\quad
0 \leq j \leq d.
\end{equation*}

We shall not need it here, but for $d\geq 2$ these simplices have the following geometric interpretation. For each vertex, its altitude contains $0$, and $0$ lies in the relative interior of the simplex. Conversely, every $d$-dimensional simplex with these two properties satisfies the above identities for suitable parameters $\tau_0,\ldots,\tau_d>0$.

In particular, all regular simplices centered at the origin are obtained by taking $\tau_0= \ldots = \tau_d$.

Now, for $x\in \R^n \bsl \aff(y_0, \ldots, y_d)$ the vectors $y_0-x, \ldots, y_d-x$ are linearly independent, so
$$
\pos(y_0-x, \ldots, y_d-x)
=
\pos(\tau_0(y_0-x), \ldots, \tau_d (y_d-x))
$$
is a simplicial cone. Put
$$
t=(\tau_0,\ldots,\tau_d)^T,
\qquad
r
=
\left( \tau_i \left( \lan y_i,x \ran + \frac{1}{2s} - \frac{\|x\|^2}{2} \right) \right)_{i=0}^d.
$$
The generators
$
\tau_0(y_0-x), \ldots, \tau_d(y_d-x)
$
have the Gram matrix
\begin{align*}
\left( \left\lan \tau_i(y_i-x),\tau_j(y_j-x) \right\ran \right)_{i,j=0}^d
&=
\left( \delta_{ij} - \tau_i \tau_j \left( \lan y_i,x \ran + \lan y_j,x \ran + \frac{1}{s} - \|x\|^2 \right) \right)_{i,j=0}^d\\
&=
\left(\delta_{ij} -t_i r_j -r_i t_j \right)_{i,j=0}^d\\
&=
\Id_{d+1} - tr^T - rt^T.
\end{align*}
The cones over acute orthocentric simplices constructed here need not be orthocentric cones in the sense of Example~\ref{ex:orthoc_cones}.
\end{remark}

To compute the conic intrinsic volumes using~\eqref{eq:conic_intr_volume_general}, we express the required solid angles in terms of the function on the right-hand side of~\eqref{eq:solid_angle_with_orthonormal_perturbation}. We introduce the following notation for this function.

\begin{definition}[The function $\Psi$]
\label{def:solid_angle_rank_p_perturbation}
Let $n \in \N_0$, $p\in \{0,1,\ldots, n\}$, and $\lambda_1, \ldots, \lambda_p >-1$. Let $u_1, \ldots, u_p\in \R^n$ be orthonormal. Define
\begin{equation}\label{eq:def_Psi}
\Psi_n(\lambda_1, \ldots, \lambda_p; u_1, \ldots, u_p)
=
\frac{1}{(2\pi)^{p/2}} \int_{\R^p} \eee^{-(y_1^2+ \ldots + y_p^2)/2} \prod_{r=1}^n \Phi\left( \ii \sum_{j=1}^p \frac{\sqrt{\lambda_j}}{\sqrt{1+\lambda_j}} (u_j)_r y_j \right)\, \dd y_1 \cdots \dd y_p.
\end{equation}
Here, we use the convention $\sqrt{x}=\ii \sqrt{-x}$ for $x<0$.
\end{definition}
In particular, $\Psi_n(;)=2^{-n}$ for every $n\in\N_0$ and $\Psi_0(;)=1$.

By Theorem~\ref{theo:solid_angle_orthonormal_perturb} applied in $\lin(C)$, if a simplicial cone $C$ has generators with Gram matrix
$
\Id_n + \sum_{j=1}^p \lambda_j u_j u_j^T
$, then its solid angle is
$
\alpha(C)
=
\Psi_n(\lambda_1, \ldots, \lambda_p; u_1, \ldots, u_p).
$

The following proposition gives explicit formulas for the solid angles of the faces and normal cones of a cone in the present family, and hence for its conic intrinsic volumes.

\begin{proposition}\label{prop:intrinsic_volumes_orthocentric_cone}
Let $0 \leq d \leq n-1$. Let $t \in (0,\infty)^{d+1}$, $r\in\R^{d+1}$, and let $z_0,\ldots,z_d\in\R^n$ be linearly independent vectors with Gram matrix
\begin{equation}\label{eq:z_i_gram_matrix}
\left(\lan z_i, z_j \ran \right)_{i,j=0}^d
=
\Id_{d+1}-tr^T-rt^T.
\end{equation}
Put
$$
C:=\pos(z_0,\ldots,z_d)\subseteq\R^n.
$$
For $J \subseteq \{0,\ldots,d\}$ write $J^c:=\{0,\ldots,d\} \bsl J$ and $w_J = (w_j)_{j\in J}$ for $w\in \R^{d+1}$. Put $C_J:=\pos(z_j:j\in J)$, with the convention $C_\varnothing=\{0\}$. Then, the conic intrinsic volumes of $C$ are given by
\begin{equation}\label{eq:intrinsic_volumes_orthocentric_cone}
\upsilon_k(C)
=
\sum_{\substack{J \subseteq \{0, \ldots, d\}\\ |J|=k}} \alpha(C_J) \alpha(N(C_J, C)),
\qquad
k\in\{0,\ldots,d+1\},
\end{equation}
where all factors on the right-hand side are given explicitly as follows.

\vspace{.2cm}
\noindent
\underline{Angles of the faces.}

First,
$
\alpha(C_\varnothing)=1
$
by convention.

If $J \neq \varnothing$ and $t_J, r_J$ are linearly independent, then
\begin{equation}\label{eq:face_angle_independent}
\alpha(C_J)
=
\Psi_{|J|} \Bigg(
 \|t_J\|\|r_J\|-\lan t_J,r_J\ran,
-\|t_J\|\|r_J\|-\lan t_J,r_J\ran;
\frac{
\frac{t_J}{\|t_J\|}-\frac{r_J}{\|r_J\|}
}{
\sqrt{ 2 \left( 1 - \frac{\lan t_J, r_J \ran}{\|t_J\| \|r_J\|} \right)}
},
\frac{
\frac{t_J}{\|t_J\|} + \frac{r_J}{\|r_J\|}
}{
\sqrt{ 2 \left( 1 + \frac{\lan t_J, r_J \ran}{\|t_J\| \|r_J\|}\right)}
}
\Bigg).
\end{equation}

If $J \neq \varnothing$ and $t_J,r_J$ are linearly dependent, then
\begin{equation}\label{eq:face_angle_dependent}
\alpha(C_J)
=
\Psi_{|J|} \left( -2 \lan t_J,r_J \ran; \frac{t_J}{\|t_J\|} \right).
\end{equation}

\vspace{.2cm}
\noindent
\underline{Angles of the normal cones.}
\begin{enumerate}[label=\textup{(\roman*)}]
\item First, for $J=\{0,\ldots,d\}$, we have
$
\alpha(N(C,C))=1.
$

\item Suppose that $t$ and $r$ are linearly independent and let $J \subsetneqq \{0,\ldots,d\}$. Put
\begin{align}
\lambda
&=
\|t\| \|r\|- \lan t,r \ran,
&\qquad
\mu
&=
-\|t\| \|r\| - \lan t,r \ran,
\label{eq:prop_intr_vol_parameters_normal_angle_1} \\
u
&=
\frac{
\frac{t}{\|t\|} - \frac{r}{\|r\|}
}{
\sqrt{ 2 \left( 1 - \frac{\lan t,r \ran}{\|t\| \|r\|} \right)}
},
&\qquad
v
&=
\frac{ \frac{t}{\|t\|} + \frac{r}{\|r\|}
}{
\sqrt{ 2 \left( 1+ \frac{\lan t,r \ran}{\|t\| \|r\|} \right)}
}.
\label{eq:prop_intr_vol_parameters_normal_angle_2}
\end{align}
Then $\lambda>0$ and $-1<\mu<0$. Put
\begin{equation}\label{eq:p_I_q_I}
p_J
=
\frac1{\sqrt{2}} \left( \sqrt{ \frac{\lambda}{1+ \lambda}} u_{J^c} + \sqrt{ \frac{-\mu}{1+ \mu}} v_{J^c} \right),
\qquad
q_J
=
\frac1{\sqrt{2}} \left( \sqrt{\frac{\lambda}{1 + \lambda}} u_{J^c} - \sqrt{\frac{-\mu}{1 + \mu}} v_{J^c} \right).
\end{equation}
If $p_J=0$ or $q_J=0$, then
\begin{equation}\label{eq:normal_angle_zero}
\alpha (N(C_J,C) )
=
2^{-|J^c|}.
\end{equation}
If $p_J,q_J$ are linearly independent, then
\begin{multline}\label{eq:normal_angle_independent}
\alpha (N(C_J,C) )
=
\Psi_{|J^c|} \Bigg(
\|p_J\| \|q_J\| - \lan p_J, q_J \ran,
- \|p_J\| \|q_J\| - \lan p_J, q_J \ran;\\
\frac{
\frac{p_J}{\|p_J\|} - \frac{q_J}{\|q_J\|}
}{
\sqrt{ 2 \left( 1- \frac{\lan p_J, q_J \ran}{\|p_J\| \|q_J\|} \right)}
},
\frac{
\frac{p_J}{\|p_J\|} + \dfrac{q_J}{\|q_J\|}
}{
\sqrt{ 2 \left(1 + \frac{\lan p_J, q_J \ran}{\|p_J\| \|q_J\|} \right)}
}
\Bigg).
\end{multline}
If $p_J,q_J$ are non-zero and linearly dependent, then
\begin{equation}\label{eq:normal_angle_dependent}
\alpha (N(C_J,C) )
=
\Psi_{|J^c|} \left( -2 \lan p_J, q_J \ran; \frac{p_J}{\|p_J\|} \right).
\end{equation}

\item Suppose that $t$ and $r$ are linearly dependent. For every proper subset $J \subsetneqq \{0,\ldots,d\}$,
\begin{equation}\label{eq:normal_angle_rank_one}
\alpha ( N(C_J,C)  )
=
\Psi_{|J^c|} \left( \frac{2 \lan t,r \ran}{1 - 2 \lan t,r \ran} \frac{\|t_{J^c}\|^2}{\|t\|^2}; \frac{t_{J^c}}{\|t_{J^c}\|} \right).
\end{equation}
\end{enumerate}
\end{proposition}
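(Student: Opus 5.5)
Since $C$ is simplicial with linearly independent generators $z_0,\ldots,z_d$, its $k$-faces are exactly the cones $C_J$ with $|J|=k$. Formula~\eqref{eq:intrinsic_volumes_orthocentric_cone} is therefore just~\eqref{eq:conic_intr_volume_general}. The plan is to write, for each face $C_J$ and for each normal cone $N(C_J,C)$ (the latter taken relative to $\lin(C)$, i.e.\ its pointed part), generators whose Gram matrix is the identity plus a rank $\le 2$ perturbation in orthonormal form. Then Theorem~\ref{theo:solid_angle_orthonormal_perturb}, in the form $\alpha=\Psi$, gives the stated values.

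\emph{Faces.} The Gram matrix of $(z_j)_{j\in J}$ is $\Id_{|J|}-t_Jr_J^T-r_Jt_J^T$, and the task is to diagonalize $S=-(ab^T+ba^T)$ with $a=t_J$, $b=r_J$. If $a,b$ are independent, set $\hat a=a/\|a\|$ and $\hat b=b/\|b\|$. The vectors $\hat a\mp\hat b$ are orthogonal, because $\|\hat a\|=\|\hat b\|=1$. A direct computation gives
\[
S(\hat a-\hat b)=(\|a\|\|b\|-\lan a,b\ran)(\hat a-\hat b),\qquad S(\hat a+\hat b)=(-\|a\|\|b\|-\lan a,b\ran)(\hat a+\hat b),
\]
and $S$ vanishes on $\{a,b\}^\perp$. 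Normalizing, with $\|\hat a\mp\hat b\|^2=2(1\mp\lan\hat a,\hat b\ran)$, yields~\eqref{eq:face_angle_independent}. If $a,b$ are dependent, then $b=c\,a$ since $a=t_J\neq0$, so $S=-2c\,aa^T=-2\lan a,b\ran\,\hat a\hat a^T$, which gives~\eqref{eq:face_angle_dependent}. Positive definiteness is automatic, and Proposition~\ref{prop:faces_polars_cones_perturbation}(b) guarantees that the eigenvalues exceed $-1$.

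\emph{Normal cones.} The key structural fact is as follows. Let $w_0,\ldots,w_d\in\lin(C)$ be the dual generators with $\lan z_i,w_k\ran=-\delta_{ik}$. Then $N(C_J,C)\cap\lin(C)=\pos(w_k:k\in J^c)$, and the remaining part of $N(C_J,C)$ is $\lin(C)^\perp$, which does not affect the angle; this is standard conic duality, as in the proof of Proposition~\ref{prop:faces_polars_cones_perturbation}(a). The Gram matrix of all the $w_k$ is $\Gamma^{-1}$, so the relevant Gram matrix is the principal submatrix $(\Gamma^{-1})_{J^c,J^c}$.

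Case (ii) starts from the face computation with $J$ replaced by the full index set: $\Gamma=\Id+\lambda uu^T+\mu vv^T$ with $\lambda,\mu$ as in~\eqref{eq:prop_intr_vol_parameters_normal_angle_1}. Here $\lambda>0$ and $\mu<0$ by Cauchy--Schwarz, and $\mu>-1$ by positive definiteness (Lemma~\ref{lem:equiv_condition_for_pos_def_for_orthonormal_perturb}). Lemma~\ref{lem:inverse_of_matr_with_orthonormal_perturbation} then gives
\[
\Gamma^{-1}=\Id-\alpha^2uu^T+\beta^2vv^T,\qquad \alpha=\sqrt{\lambda/(1+\lambda)},\ \beta=\sqrt{-\mu/(1+\mu)}.
\]
Restricting to $J^c$ and writing $a=u_{J^c}$, $b=v_{J^c}$, the polarization-type identity
\[
-\alpha^2aa^T+\beta^2bb^T=-(pq^T+qp^T),\qquad p=\tfrac1{\sqrt2}(\alpha a+\beta b),\ q=\tfrac1{\sqrt2}(\alpha a-\beta b),
\]
holds; this is exactly~\eqref{eq:p_I_q_I}. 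So $(\Gamma^{-1})_{J^c,J^c}=\Id-p_Jq_J^T-q_Jp_J^T$, which has the same shape as in the face case, and the face diagonalization applies verbatim. There is one new subcase compared with the faces, since $p_J$ may now vanish. If $p_J=0$ or $q_J=0$, the perturbation vanishes and the angle is that of an orthant, $2^{-|J^c|}$. Otherwise, the independent and dependent cases give~\eqref{eq:normal_angle_independent} and~\eqref{eq:normal_angle_dependent}.

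Case (iii): here $r=ct$, so $\Gamma=\Id-2c\,tt^T=\Id+\kappa\hat t\hat t^T$ with $\kappa=-2\lan t,r\ran$. Lemma~\ref{lem:inverse_of_matr_with_orthonormal_perturbation} gives $\Gamma^{-1}=\Id+\frac{2\lan t,r\ran}{1-2\lan t,r\ran}\hat t\hat t^T$. Restricting to $J^c$ turns $\hat t\hat t^T$ into $\frac{\|t_{J^c}\|^2}{\|t\|^2}\,\hat t_{J^c}\hat t_{J^c}^T$, where $t_{J^c}\neq0$ because $t>0$; this is~\eqref{eq:normal_angle_rank_one}. The main obstacle is the normal-cone identification together with the choice of the decomposition into $p_J,q_J$. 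Everything else is the single rank-two diagonalization lemma, applied twice.
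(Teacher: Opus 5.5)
Your proposal is correct and follows the paper's proof essentially step for step. On the face side, the rank-two decomposition of $-tr^T-rt^T$ is what Lemma~\ref{lem:orthonormal_representation_from_t_r} provides; you verify it through the eigenvectors $\hat a\mp\hat b$, whereas the paper expands $\lambda uu^T+\mu vv^T$ directly. On the normal-cone side, you use the same route as the paper: dual generators with Gram matrix $\Gamma^{-1}$, Lemma~\ref{lem:inverse_of_matr_with_orthonormal_perturbation}, the principal submatrix on $J^c$, and the same $p_J,q_J$ polarization. Case (i) is only left implicit, and your framework covers it because $J^c=\varnothing$ gives the pointed part $\{0\}$, which has angle $1$ by convention.
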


The following lemma shows how to write the matrix $-tr^T-rt^T$ in a form needed to apply Theorem~\ref{theo:solid_angle_orthonormal_perturb}.

\begin{lemma}\label{lem:orthonormal_representation_from_t_r}
For $m\in \N$ let $t,r\in \R^m$ and suppose $t\neq 0$.
\begin{enumerate}
\item If $t$ and $r$ are linearly independent, then put
\begin{align*}
u
&:=
\frac{\frac{t}{\|t\|} - \frac{r}{\|r\|}
}{
\sqrt{2 \left( 1- \frac{\lan t,r \ran}{\|t\| \|r\|} \right)}
},
&\qquad
v
&:=
\frac{\frac{t}{\|t\|} + \frac{r}{\|r\|}
}{
	\sqrt{2 \left( 1+ \frac{\lan t,r \ran}{\|t\| \|r\|} \right)}
},\\
\lambda
&:=
\|t\| \|r\| - \lan t,r \ran >0,
&\qquad
\mu
&:=
-\|t\| \|r\| - \lan t,r \ran <0.
\end{align*}
Then the vectors $u$ and $v$ are orthonormal and
$$
-tr^T -rt^T
=
\lambda u u^T + \mu v v^T.
$$
\item If $t$ and $r$ are not linearly independent, then
$$
-tr^T-rt^T
=
-2\lan t,r \ran \left( \frac{t}{\|t\|} \right) \left( \frac{t}{\|t\|} \right)^T.
$$
\end{enumerate}
\end{lemma}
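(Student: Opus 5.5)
The plan is to treat part (a) as a direct computation with the normalized vectors $a:=t/\|t\|$ and $b:=r/\|r\|$, and part (b) as a one-line substitution. In part (a), linear independence gives $r\neq 0$, so $a,b$ are well-defined unit vectors. Write $c:=\langle a,b\rangle=\langle t,r\rangle/(\|t\|\|r\|)$. Linear independence of $t,r$ together with the equality case of Cauchy--Schwarz gives $|c|<1$. Hence both normalizing factors $\sqrt{2(1\mp c)}$ are positive, and
$$
\lambda=\|t\|\|r\|(1-c)>0,
\qquad
\mu=-\|t\|\|r\|(1+c)<0.
$$
This settles the sign claims.

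Next I check orthonormality. Since $\|a\pm b\|^2=2(1\pm c)$, the vectors $u=(a-b)/\|a-b\|$ and $v=(a+b)/\|a+b\|$ have unit length. Also $\langle a-b,a+b\rangle=\|a\|^2-\|b\|^2=0$, so $u\perp v$.

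The main identity I will verify by expanding rank-one terms. With $\lambda uu^T=\|t\|\|r\|\,\frac{(a-b)(a-b)^T}{2}$ and $\mu vv^T=-\|t\|\|r\|\,\frac{(a+b)(a+b)^T}{2}$, adding gives
$$
\frac{\|t\|\|r\|}{2}\bigl[(a-b)(a-b)^T-(a+b)(a+b)^T\bigr]=\frac{\|t\|\|r\|}{2}\bigl(-2ab^T-2ba^T\bigr).
$$
This equals $-\|t\|\|r\|(ab^T+ba^T)=-tr^T-rt^T$. The only care needed is the bookkeeping of the factors $1\mp c$: they cancel between $\lambda,\mu$ and the denominators of $u,v$. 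This is the step I expect to be the (mild) main obstacle, and writing everything through $a,b,c$ keeps it transparent.

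For part (b), since $t\neq0$ and $t,r$ are dependent, we have $r=\kappa t$ with $\kappa=\langle t,r\rangle/\|t\|^2$. This covers $r=0$ via $\kappa=0$. Then $-tr^T-rt^T=-2\kappa tt^T=-2\langle t,r\rangle\,(t/\|t\|)(t/\|t\|)^T$, which is the claim.
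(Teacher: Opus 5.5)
Your proposal is correct and follows essentially the same route as the paper: Cauchy--Schwarz for the signs and nonvanishing denominators, the expansion $\frac{\|t\|\|r\|}{2}\bigl[(a-b)(a-b)^T-(a+b)(a+b)^T\bigr]$ after the factors $1\mp c$ cancel, and the substitution $r=\kappa t$ for part (b). Your identities $\|a\pm b\|^2=2(1\pm c)$ and $\langle a-b,a+b\rangle=0$ simply make explicit the orthonormality check that the paper calls a direct computation.
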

\begin{proof}
\emph{Proof of (a).} Suppose $t, r$ are linearly independent. Hence, the Cauchy-Schwarz inequality entails that $\lambda>0$, $\mu<0$, and that the denominators in the definition of $u$ and $v$ are non-zero.
A direct computation yields $\|u\|^2 = \|v\|^2 = 1$ and $\lan u,v\ran = 0$. Finally,
\begin{align*}
&\lambda u u^T + \mu v v^T\\
&=
\left( \|t\| \|r\| - \lan t,r \ran \right)
\frac{ \left( \frac{t}{\|t\|} - \frac{r}{\|r\|} \right) \left( \frac{t}{\|t\|} - \frac{r}{\|r\|} \right)^T
}{
2 \left( 1- \frac{\lan t,r \ran}{\|t\| \|r\|} \right)
}
+
\left(- \|t\| \|r\| - \lan t,r \ran \right)
\frac{ \left( \frac{t}{\|t\|} + \frac{r}{\|r\|} \right) \left( \frac{t}{\|t\|} + \frac{r}{\|r\|} \right)^T
}{
2 \left( 1+ \frac{\lan t,r \ran}{\|t\| \|r\|} \right)
}\\
&=
\frac{\|t\| \|r\|}{2} \left( \left( \frac{t}{\|t\|} - \frac{r}{\|r\|} \right) \left( \frac{t}{\|t\|} - \frac{r}{\|r\|} \right)^T - \left( \frac{t}{\|t\|} + \frac{r}{\|r\|} \right) \left( \frac{t}{\|t\|} + \frac{r}{\|r\|} \right)^T \right)\\
&=
\frac{\|t\| \|r\|}{2} \left( -2 \left( \frac{t}{\|t\|} \right) \left( \frac{r}{\|r\|} \right)^T -2 \left( \frac{r}{\|r\|} \right) \left( \frac{t}{\|t\|} \right)^T \right)\\
&=
-tr^T-rt^T.
\end{align*}

\noindent
\emph{Proof of (b).}
Suppose $t,r$ are not linearly independent. Since $t\neq 0$, hence $r=ct$ for some $c\in \R$. Then $\lan t,r\ran = c \|t\|^2$ and consequently
$$
-tr^T-r t^T
=
-t (ct)^T - ct t^T
=
-2ctt^T
=
-2\frac{\lan t,r \ran}{\|t\|^2} t t^T
=
-2 \lan t,r \ran \left( \frac{t}{\|t\|} \right) \left( \frac{t}{\|t\|} \right)^T.
$$
\end{proof}

\begin{proof}[Proof of Proposition~\ref{prop:intrinsic_volumes_orthocentric_cone}]

The generating vectors $z_0, \ldots, z_d$ of $C$ are linearly independent. It follows that $C$ is a simplicial cone, with faces
$$
C_J = \pos(z_j: j\in J),
\qquad
J\subseteq\{0,\ldots,d\},
$$
satisfying $\dim(C_J)=|J|$.
Consequently, the definition~\eqref{eq:conic_intr_volume_general} of conic intrinsic volumes immediately proves~\eqref{eq:intrinsic_volumes_orthocentric_cone}. It remains to show the stated formulas for the angles of the faces and their normal cones.

For $J \neq \varnothing$ let $G_J$ be the Gram matrix of $(z_j)_{j\in J}$. By~\eqref{eq:z_i_gram_matrix},
\begin{equation}\label{eq:Gram_matrix_face_C_I}
G_J
=
\Id_{|J|}- t_J r_J^T - r_J t_J^T.
\end{equation}
The matrix $G_J$ is positive definite as the Gram matrix of linearly independent vectors.

\medskip
\noindent
\underline{Angles of the faces.}
Suppose first that $J \neq \varnothing$ and $t_J$ and $r_J$ are linearly independent.
Lemma~\ref{lem:orthonormal_representation_from_t_r}, applied to
$t_J$ and $r_J$, yields
$$
G_J
=
\Id_{|J|} + \lambda_J \widetilde{u}_J \widetilde{u}_J^T + \mu_J \widetilde{v}_J \widetilde{v}_J^T,
$$
where $\lambda_J, \mu_J, \widetilde{u}_J, \widetilde{v}_J$ are the parameters appearing in~\eqref{eq:face_angle_independent}. Since $G_J$ is positive definite, Theorem~\ref{theo:solid_angle_orthonormal_perturb} now gives~\eqref{eq:face_angle_independent}.

\medskip
\noindent
Suppose $J \neq \varnothing$ and that $t_J$ and $r_J$ are linearly dependent. The assumption $t \in (0, \infty)^{d+1}$ yields $t_J \neq 0$. Hence Lemma~\ref{lem:orthonormal_representation_from_t_r} gives
$$
G_J
=
\Id_{|J|} - 2 \lan t_J,r_J \ran \left(\frac{t_J}{\|t_J\|} \right) \left( \frac{t_J}{\|t_J\|} \right)^T.
$$
Since $G_J$ is positive definite, Theorem~\ref{theo:solid_angle_orthonormal_perturb} gives~\eqref{eq:face_angle_dependent}. 

\medskip
\noindent
\underline{Angles of the normal cones.}

\medskip
\noindent
\emph{Proof of (i).} The normal cone of $C$ at itself is $N(C,C)=\lin(C)^{\perp}$, which is a linear subspace. Thus $\alpha(N(C,C)) = 1$.

\medskip
\noindent
\emph{Proof of (ii) and (iii).}
Put $G:=G_{\{0,\ldots,d\}}$. Since $z_0, \ldots, z_d$ are linearly independent, there exist  unique vectors $z_0^*, \ldots, z_d^*\in \lin(C)$ satisfying
$$
\lan z_i^*, z_j \ran
=
-\delta_{ij},
\qquad
\text{ for all }
i,j \in \{0,\ldots,d\}.
$$

For every $j\in\{0,\ldots,d\}$, the vector $z_j^*$ is an outer normal vector, relative to $\lin(C)$, of the facet
$
C_{\{0,\ldots,d\} \bsl \{j\}},
$
since it is orthogonal to this facet and $\lan z_j^*,z_j\ran= -1 < 0$.

For every $J \subsetneqq \{0,\ldots,d\}$, the facets of $C$ containing $C_J$ are precisely all
$
C_{\{0,\ldots,d\} \bsl \{j\}}
$
with $j\in J^c$. By conic duality, the normal cone is the orthogonal direct sum of the positive hull of the corresponding relative outer normal vectors and $\lin(C)^\perp$. Hence,
\begin{equation*}
N(C_J,C)
=
\pos(z_j^*:j\in J^c) \oplus \lin(C)^\perp.
\end{equation*}
Taking the direct orthogonal sum with a subspace $\lin(C)^\perp$ does not change the solid angle. Hence
\begin{equation}\label{eq:normal_angle_polar_cone}
\alpha(N(C_J,C))
=
\alpha( \pos(z_j^*: j \in J^c) ).
\end{equation}
Since $z_0, \ldots, z_d$ have Gram matrix $G$, it follows, as in the proof of Proposition~\ref{prop:faces_polars_cones_perturbation}, that the Gram matrix of $z_0^*,\ldots,z_d^*$ is $G^{-1}$.
Hence, the Gram matrix of the generators in \eqref{eq:normal_angle_polar_cone} is the submatrix
$$
\left( \lan z_i^*, z_j^* \ran \right)_{i,j\in J^c}
=
\left( (G^{-1})_{i,j} \right)_{i,j \in J^c}
=:
(G^{-1})_{J^c, J^c}.
$$

\medskip
\noindent
\emph{Proof of (ii)}. Suppose that $t$ and $r$ are linearly independent, and fix $J \subsetneqq \{0, \ldots, d\}$.
First, Lemma~\ref{lem:orthonormal_representation_from_t_r} gives
$$
G
=
\Id_{d+1}+\lambda uu^T+\mu vv^T,
$$
where the parameters $\lambda, \mu \in \R$ and the orthonormal vectors $u,v \in \R^{d+1}$ are given by~\eqref{eq:prop_intr_vol_parameters_normal_angle_1} and~\eqref{eq:prop_intr_vol_parameters_normal_angle_2}. 
Moreover, the lemma gives
$
\lambda>0,
$
and
$
\mu<0.
$
Since $G$ is positive definite and $u,v$ are orthonormal, Lemma~\ref{lem:equiv_condition_for_pos_def_for_orthonormal_perturb} also yields $\lambda,\mu>-1$. Consequently, $\lambda>0$ and $-1<\mu<0$.
In particular, the square roots appearing in~\eqref{eq:p_I_q_I} are real, so $p_J$ and $q_J$ are well-defined.

Lemma~\ref{lem:inverse_of_matr_with_orthonormal_perturbation} entails
$$
G^{-1}
=
\Id_{d+1} - \frac{\lambda}{1 +\lambda} uu^T - \frac{\mu}{ 1+ \mu} vv^T.
$$
The definition~\eqref{eq:p_I_q_I} of $p_J$ and $q_J$ yields
\begin{align*}
p_Jq_J^T+q_Jp_J^T
&=
\frac{\lambda}{1+\lambda}
u_{J^c}u_{J^c}^T
+
\frac{\mu}{1+\mu}
v_{J^c}v_{J^c}^T.
\end{align*}
Therefore
\begin{equation}\label{eq:normal_Gram_p_I_q_I}
\left( \lan z_i^*, z_j^* \ran \right)_{i,j\in J^c}
=
(G^{-1})_{J^c,J^c}
=
\Id_{|J^c|}- p_J q_J^T - q_J p_J^T.
\end{equation}
We now compute the solid angle in~\eqref{eq:normal_angle_polar_cone} from this Gram matrix, distinguishing the three possible cases for $p_J$ and $q_J$.

If $p_J=0$ or $q_J=0$, then~\eqref{eq:normal_Gram_p_I_q_I} shows that the vectors $(z_j^*:j\in J^c)$ are orthonormal. Hence  $\pos(z_i^*: i\in J^c)$ is a $|J^c|$-dimensional orthant. Then~\eqref{eq:normal_angle_polar_cone} becomes
$
\alpha (N(C_J,C) )= 2^{-|J^c|},
$
which proves~\eqref{eq:normal_angle_zero}.

If $p_J$ and $q_J$ are linearly independent, we apply Lemma~\ref{lem:orthonormal_representation_from_t_r} (a) to the right-hand side of~\eqref{eq:normal_Gram_p_I_q_I}. The resulting parameters and orthonormal vectors are those appearing in~\eqref{eq:normal_angle_independent}. Since $(G^{-1})_{J^c,J^c}$ is positive definite, Theorem~\ref{theo:solid_angle_orthonormal_perturb}, together with~\eqref{eq:normal_angle_polar_cone}, proves~\eqref{eq:normal_angle_independent}.

Finally, if $p_J$ and $q_J$ are non-zero and linearly dependent, then Lemma~\ref{lem:orthonormal_representation_from_t_r} (b) gives
$$
(G^{-1})_{J^c,J^c}
=
\Id_{|J^c|} -2 \lan p_J,q_J \ran \left( \frac{p_J}{\|p_J\|} \right) \left( \frac{p_J}{\|p_J\|} \right)^T.
$$
Applying Theorem~\ref{theo:solid_angle_orthonormal_perturb} and using~\eqref{eq:normal_angle_polar_cone} proves~\eqref{eq:normal_angle_dependent}.

\medskip
\noindent
\emph{Proof of (iii).} Suppose that $t$ and $r$ are linearly dependent.
Lemma~\ref{lem:orthonormal_representation_from_t_r} (b) gives
$$
G
=
\Id_{d+1} -2 \lan t,r \ran \left( \frac{t}{\|t\|} \right) \left( \frac{t}{\|t\|} \right)^T.
$$
Since $G$ is positive definite,  Lemma~\ref{lem:inverse_of_matr_with_orthonormal_perturbation} entails
$$
G^{-1}
=
\Id_{d+1}
+
\frac{2 \lan t,r \ran}{1- 2 \lan t,r\ran} \frac{tt^T}{\|t\|^2}.
$$

Since $J\subsetneqq\{0,\ldots,d\}$ and $t\in(0,\infty)^{d+1}$, we have 
$t_{J^c}\neq 0$ and hence
\begin{align*}
(G^{-1})_{J^c,J^c}
=
\Id_{|J^c|}
+
\frac{2\lan t,r\ran}{1-2\lan t,r\ran}
\frac{t_{J^c}t_{J^c}^T}{\|t\|^2}
=
\Id_{|J^c|}
+
\left(
\frac{2\lan t,r\ran}{1-2\lan t,r\ran}
\frac{\|t_{J^c}\|^2}{\|t\|^2}
\right)
\left(\frac{t_{J^c}}{\|t_{J^c}\|}\right)
\left(\frac{t_{J^c}}{\|t_{J^c}\|}\right)^T.
\end{align*}
Hence Theorem~\ref{theo:solid_angle_orthonormal_perturb}, applied to the cone $\pos(z_i^*:i\in J^c)$ with the Gram matrix computed above, gives the right-hand side of~\eqref{eq:normal_angle_rank_one}. In view of~\eqref{eq:normal_angle_polar_cone}, this proves \eqref{eq:normal_angle_rank_one}.
\end{proof}

\subsection{Random shifted sections of acute orthocentric simplices and asymmetric crosspolytopes}
We now apply Proposition~\ref{prop:intrinsic_volumes_orthocentric_cone} to compute expected face numbers of random shifted sections. We begin with a face-number identity for random linear sections.

\begin{proposition}\label{prop:face_number_identity}
Let $n \geq 2$ and let $Q \subseteq \R^n$ be a full-dimensional polytope such that $0 \notin \aff(F)$ for every proper face $F$ of $Q$. Let $\ell \in \{1,\ldots,n-1\}$, and let $L$ be uniformly distributed on the Grassmannian of $(n-\ell)$-dimensional linear subspaces of $\R^n$. Then, almost surely, for every $j \in \{0,\ldots,n-\ell-1\}$,
\begin{equation}\label{eq:face_number_almost_sure_identity}
f_j(Q \cap L)
=
\sum_{F \in \mathcal F_{j+\ell}(Q)}
\ind_{\{F \cap L \neq \varnothing\}}.
\end{equation}
Here, we use the convention $f_j(\varnothing)=0$.
\end{proposition}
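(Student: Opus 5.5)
The plan is to establish a general-position property of $L$ almost surely and then use the standard description of faces of a section of a polytope by a linear subspace. The claim to prove is that, for almost every $L$, the $j$-dimensional faces of $Q\cap L$ are exactly the sets $F\cap L$ with $F\in\mathcal F_{j+\ell}(Q)$ and $F\cap L\neq\varnothing$, and that distinct such $F$ give distinct faces.

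\emph{Step 1 (genericity).} For every face $F$ of $Q$ (proper or not), almost surely $\dim(\aff(F)\cap L)=\dim F-\ell$ when $\aff(F)\cap L\ne\varnothing$, and $\aff(F)\cap L=\varnothing$ when $\dim F<\ell$. For proper faces, the hypothesis $0\notin\aff(F)$ means that $\lin(F)=\lin(\aff F)$ has dimension $\dim F+1$ and contains $\aff(F)$ as an affine hyperplane not through $0$. Then $\aff(F)\cap L$ is governed by $\lin(F)\cap L$: a uniform $(n-\ell)$-subspace meets the fixed $(\dim F+1)$-space $\lin(F)$ in dimension $\max(\dim F+1-\ell,0)$ almost surely. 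If $\lin(F)\cap L=\{0\}$ then $\aff(F)\cap L=\varnothing$. Otherwise $\aff(F)\cap L$ is an affine hyperplane of $\lin(F)\cap L$ or is empty; the degenerate case $\lin(F)\cap L\subseteq$ the direction space of $\aff F$ has probability zero. In every case we get dimension $\dim F-\ell$. For $F=Q$, $L$ meets $\operatorname{int}Q$ whenever $Q\cap L\neq\varnothing$; I expect to handle this separately. Since there are finitely many faces, we may intersect these almost-sure events.

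\emph{Step 2 (faces of the section).} For a polytope $Q$ and an affine subspace $L$ meeting $Q$, every nonempty face of $Q\cap L$ has the form $F\cap L$ with $F$ the unique face of $Q$ whose relative interior meets $L$ within that face. Conversely, $F\cap L$ is a face of $Q\cap L$ for any face $F$. This follows by taking a supporting hyperplane and the face lattice; I would cite or prove it via the "smallest face containing a relative interior point" argument. Under genericity, if $F\cap L\neq\varnothing$ then $\relint F\cap L\neq\varnothing$, because otherwise $L$ would meet only a proper face of $F$, which has dimension less than $\dim F$. That would force a lower-dimensional intersection, contradicting Step 1's dimension counts. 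In that case $\dim(F\cap L)=\dim F-\ell$. Hence the $j$-faces of $Q\cap L$ correspond to the $(j+\ell)$-faces $F$ with $F\cap L\ne\varnothing$.

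\emph{Step 3 (injectivity).} I must show that distinct $F,F'$ give distinct sets $F\cap L$. If $F\cap L=F'\cap L\neq\varnothing$, then a point of $\relint F\cap L$ that also lies in $F'$ forces $F\subseteq F'$, since faces containing a relative interior point of $F$ contain $F$. Both faces have the same dimension, so $F=F'$.

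I expect the main obstacle to be the careful genericity argument in Step 1, particularly ruling out tangential intersections where $L$ touches $F$ only at its relative boundary. I would handle this by showing that the bad set of $L$ lies in a finite union of lower-dimensional subvarieties of the Grassmannian, and hence has measure zero.
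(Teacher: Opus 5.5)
Your overall plan is the paper's (Appendix, Lemmas~\ref{lem:app_general_position_assumption}--\ref{lem:app_relative_interior_intersection}): establish general position, then show that meeting a face implies meeting its relative interior, then use the bijection $F\mapsto F\cap L$. Step~1 is fine. Using $0\notin\aff(F)$ you even get $\aff(F)\cap L=\varnothing$ for $\dim F<\ell$. The degenerate case is excluded because almost surely $\dim(L\cap D_F)=\dim F-\ell<\dim(L\cap\lin F)$, where $D_F$ is the direction space of $\aff(F)$. The case $F=Q$ needs no separate treatment: $\aff(Q)\cap L=L$ trivially has dimension $n-\ell$, and only proper faces enter the sum.

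\textbf{The gap is in Step~2.} You claim that $F\cap L\neq\varnothing$ forces $L\cap\relint(F)\neq\varnothing$. Both your dimension count $\dim(F\cap L)=\dim F-\ell$ and your injectivity argument in Step~3 rest on this claim. Your justification does not work. If $F\cap L\subseteq G$ for a proper face $G$, you only get
\[
\dim(F\cap L)\leq\dim G-\ell<\dim F-\ell=\dim(\aff(F)\cap L),
\]
and this contradicts nothing. Step~1 controls the affine space $\aff(F)\cap L$, not the convex set $F\cap L$. A priori, $F\cap L$ can be a lower-dimensional piece of $\aff(F)\cap L$ touching $F$ only at its relative boundary. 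That tangential case is exactly what must be excluded. Your fallback, showing the bad set lies in lower-dimensional subvarieties of the Grassmannian, is not carried out.

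\textbf{The missing idea} is that no further probabilistic input is needed. Genericity with respect to the face containing the contact point in its relative interior excludes tangency deterministically. Let $x\in F\cap L$ and let $G$ be the face of $F$ with $x\in\relint(G)$; if $G=F$ you are done. Otherwise, by Step~1, $\dim G\geq\ell$ and $\dim(L\cap(\aff(G)-x))=\dim G-\ell$. Counting dimensions then gives $L+(\aff(G)-x)=\R^n$. Take $y\in\relint(F)$ and write $y-x=v+u$ with $v\in L$ and $u\in\aff(G)-x$. For small $t\in(0,1)$ we have $x-\frac{t}{1-t}u\in\relint(G)\subseteq F$, hence
\[
x+tv=(1-t)\Bigl(x-\tfrac{t}{1-t}\,u\Bigr)+t\,y\in\relint(F)\cap L .
\]
This is Lemma~\ref{lem:app_relative_interior_intersection} of the paper. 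With it inserted, your Steps~2 and~3 go through as written.
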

Under the stronger assumption that $0$ lies in the interior of $Q$, this is~\cite[Proposition~4.1]{KabluchkoSeidel2022}. The proof given in~\cite[Section~5.3]{KabluchkoSeidel2022} can be adapted to the assumptions above. In fact, the identity remains valid under the weaker assumption $0\notin\partial Q$, see Proposition~\ref{prop:app_face_number_identity} in the appendix.

We retain the affine-hull condition for the applications below. Whenever a proper face $F$ is a simplex, this condition ensures that its vertices are linearly independent, so that $\pos(F)$ is a simplicial cone of dimension $\dim(F)+1$. Once the corresponding Gram matrices have been identified, we can apply Proposition~\ref{prop:intrinsic_volumes_orthocentric_cone} to the cones over shifted faces of acute orthocentric simplices and asymmetric crosspolytopes. The following theorem gives the resulting formulas.

\begin{theorem}\label{theo:expected_faces_shifted_sections}
Let $n\geq2$, let $\ell\in\{1,\ldots,n-1\}$, and let
$j\in\{0,\ldots,n-\ell-1\}$. Let $L$ be a uniformly distributed
$(n-\ell)$-dimensional linear subspace of $\R^n$.

\begin{enumerate}
\item Let $\tau_0,\ldots,\tau_n>0$, put
$
s=\tau_0^2+\ldots+\tau_n^2,
$
and let
$
P=\conv(y_0,\ldots,y_n)\subseteq\R^n
$
be an $n$-dimensional simplex satisfying
\begin{equation}\label{eq:global_orthocentric_simplex}
\lan y_i,y_k \ran
=
-\frac1s + \frac{\delta_{ik}}{\tau_i^2},
\qquad
i,k \in \{0,\ldots,n\}.
\end{equation}
Then, for every $x \in \R^n$ such that $x \notin \aff(F)$ for every proper face $F$ of $P$, we have
\begin{equation}\label{eq:expected_faces_orthocentric_simplex}
\E f_j(P\cap(x+L))
=
2 \sum_{\substack{I \subseteq \{0,\ldots,n\} \\ |I|=j+\ell+1}}
\sum_{q=0}^{\lfloor j/2 \rfloor} \upsilon_{\ell+1+2q} (\pos(y_i-x: i \in I)).
\end{equation}
Here, for every $I$, the conic intrinsic volumes are given explicitly by
Proposition~\ref{prop:intrinsic_volumes_orthocentric_cone}, with
\begin{equation}\label{eq:parameters_orthocentric_face}
t=(\tau_i)_{i\in I},
\qquad
r=
\left( \tau_i \left( \lan y_i,x \ran + \frac{1}{2s} - \frac{\|x\|^2}{2} \right) \right)_{i \in I}.
\end{equation}

\item Denote the standard orthonormal basis of $\R^n$ by $e_1, \ldots, e_n$. Let
$$
P
=
\conv (\tau_1^+ e_1, \ldots, \tau_n^+ e_n, -\tau_1^- e_1, \ldots, -\tau_n^- e_n),
$$
where $\tau_i^+, \tau_i^->0$ for all $i\in \{1,\ldots, n\}$.

For every $x=(x_1, \ldots, x_n) \in \R^n$  satisfying $x \notin \aff(F)$ for  every proper face $F$ of $P$, we have
\begin{equation}\label{eq:expected_faces_asymmetric_crosspolytope}
\E f_j (P \cap (x+L) )
=
2 \sum_{\substack{ I \subseteq \{1,\ldots,n\} \\ |I|=j+\ell+1}} \sum_{\eps \in \{-1, +1\}^{I}} \sum_{q=0}^{\lfloor j/2 \rfloor}
\upsilon_{\ell+1+2q} \left( \pos \left( \eps_i \tau_i^{[\eps_i]} e_i - x: i\in I \right) \right),
\end{equation}
with the notation
$\tau_i^{[+1]}:=\tau_i^+$
and
$\tau_i^{[-1]}:=\tau_i^-.$
Here, for every $I$ and $\eps$, the conic
intrinsic volumes are given explicitly by
Proposition~\ref{prop:intrinsic_volumes_orthocentric_cone}, with
\begin{equation}\label{eq:parameters_crosspolytope_face}
t
=
\bigg( \frac{1}{\tau_i^{[\eps_i]}} \bigg)_{i\in I},
\qquad
r
=
\bigg( \eps_i x_i - \frac{\|x\|^2}{2 \tau_i^{[\eps_i]}} \bigg)_{i\in I}.
\end{equation}
\end{enumerate}
In both parts, Proposition~\ref{prop:intrinsic_volumes_orthocentric_cone} is applied with $d=j+\ell$ and with the generators indexed by $I$ in place of $\{0,\ldots,d\}$. Thus, the faces of each such cone are indexed by subsets $J\subseteq I$, and the complement $J^c$ in that proposition is understood as $I\bsl J$.
\end{theorem}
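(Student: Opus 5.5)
The plan is to translate the polytope so the shift vector becomes the origin, and then chain three earlier results: Proposition~\ref{prop:face_number_identity}, the conic Crofton formula~\eqref{eq:conic_crofton}, and Proposition~\ref{prop:intrinsic_volumes_orthocentric_cone}.

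\textbf{Step 1: Reduction to a linear section.} Put $Q:=P-x$, so that $P\cap(x+L)=x+(Q\cap L)$ and face numbers are unchanged. The faces of $Q$ are the sets $F-x$ with $F$ a face of $P$. The hypothesis $x\notin\aff(F)$ is the same as $0\notin\aff(F-x)$. Taking expectations in~\eqref{eq:face_number_almost_sure_identity} gives
$$
\E f_j(P\cap(x+L))=\sum_{F\in\mathcal F_{j+\ell}(P)}\P((F-x)\cap L\neq\varnothing).
$$

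\textbf{Step 2: From each face to a conic intrinsic-volume sum.} Each $F\in\mathcal F_{j+\ell}(P)$ is a simplex with $j+\ell+1$ vertices $a_i$. Since $0\notin\aff(F-x)$, the vectors $a_i-x$ are linearly independent. Hence $C_F:=\pos(F-x)$ is a simplicial cone of dimension $j+\ell+1\le n$, so it is not a linear subspace.

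For a point of $L$ lying in $C_F\setminus\{0\}$, rescaling moves it into $F-x$. This shows that $(F-x)\cap L\neq\varnothing$ is equivalent to $C_F\cap L\neq\{0\}$; the point $0\notin F-x$ causes no issue. By~\eqref{eq:conic_crofton}, the probability equals $2\sum_{m\ge0}\upsilon_{\ell+1+2m}(C_F)$. Since $\upsilon_k(C_F)=0$ for $k>j+\ell+1$, the surviving indices satisfy $\ell+1+2m\le j+\ell+1$, i.e.\ $m\le\lfloor j/2\rfloor$.

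It remains to enumerate the faces.
\begin{itemize}
\item[(a)] For the simplex, the $(j+\ell)$-faces are $\conv(y_i:i\in I)$ with $|I|=j+\ell+1$.
\item[(b)] For the crosspolytope, the proper faces are $\conv(\eps_i\tau_i^{[\eps_i]}e_i:i\in I)$ with $I\subseteq\{1,\ldots,n\}$ and $\eps\in\{\pm1\}^I$. These are exactly the vertex sets containing no antipodal pair, and their dimension is $|I|-1$.
\end{itemize}
This yields~\eqref{eq:expected_faces_orthocentric_simplex} and~\eqref{eq:expected_faces_asymmetric_crosspolytope}.

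\textbf{Step 3: Identifying the Gram matrices.} This is where the real work lies. For (a), rescale the generators to $\tau_i(y_i-x)$. The Gram-matrix computation in the remark on cones over acute orthocentric simplices uses only the inner products among the $y_i$, $i\in I$, which are given by~\eqref{eq:global_orthocentric_simplex}. That computation gives $\Id-tr^T-rt^T$ with $t,r$ as in~\eqref{eq:parameters_orthocentric_face}, and $t\in(0,\infty)^I$.

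For (b), write $a_i=\eps_i\tau_i^{[\eps_i]}e_i$ and rescale to $z_i=(a_i-x)/\tau_i^{[\eps_i]}$. Then
$$
\lan z_i,z_k\ran=\frac{\lan a_i,a_k\ran-\lan a_i,x\ran-\lan a_k,x\ran+\|x\|^2}{\tau_i^{[\eps_i]}\tau_k^{[\eps_k]}}.
$$
Here $\lan a_i,a_k\ran=\delta_{ik}(\tau_i^{[\eps_i]})^2$ and $\lan a_i,x\ran=\eps_i\tau_i^{[\eps_i]}x_i$. Substituting, this equals $\delta_{ik}-t_ir_k-r_it_k$ with $t_i=1/\tau_i^{[\eps_i]}>0$ and $r_i=\eps_ix_i-\|x\|^2/(2\tau_i^{[\eps_i]})$. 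Indeed, $t_ir_k+r_it_k$ produces $\eps_kx_k/\tau_i+\eps_ix_i/\tau_k-\|x\|^2/(\tau_i\tau_k)$, which matches; this is a routine check.

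In both cases positive rescaling does not change the cone. Proposition~\ref{prop:intrinsic_volumes_orthocentric_cone} therefore applies with $d=j+\ell$ and with the indices relabelled by $I$, which gives the explicit conic intrinsic volumes.

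The main subtlety I expect is the case distinction and dimension bookkeeping in Step 2. Specifically, one must confirm that $j+\ell+1\le n$ so that the cone lives in $\R^n$ as required, and that $C_F$ is never a subspace. Beyond that, Step 3(b) is only algebra.
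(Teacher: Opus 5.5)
Your proposal is correct and follows the paper's proof step by step. Both arguments apply the face-number identity to $Q=P-x$, turn $(F-x)\cap L\neq\varnothing$ into $L\cap\pos(F-x)\neq\{0\}$, use the conic Crofton formula (truncated because $\upsilon_k$ vanishes above $\dim\pos(F-x)=j+\ell+1$), enumerate the $(j+\ell)$-faces, and check that the rescaled generators have Gram matrix $\Id-tr^T-rt^T$. In part (a), the remark's computation must be run with the global $s=\tau_0^2+\cdots+\tau_n^2$ rather than the remark's face-local $s$; you do exactly this by invoking only the inner products in \eqref{eq:global_orthocentric_simplex}.
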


\begin{proof}
Let $P$ and $x$ be as in (a) or (b). Applying~\eqref{eq:face_number_almost_sure_identity} to $Q=P-x$ gives
$$
\E f_j(P\cap(x+L))
=
\E f_j((P-x) \cap L)
=
\sum_{F' \in \mathcal F_{j+\ell}(P-x)} \P(F' \cap L \neq \varnothing )
=
\sum_{F \in \mathcal F_{j+\ell}(P)} \P((F-x) \cap L \neq \varnothing ).
$$
Since $F-x$ is convex and does not contain $0$,
$$
(F-x) \cap L \neq \varnothing
\quad \Longleftrightarrow \quad
L \cap \pos(F-x) \neq \{0\}.
$$
Moreover, every face $F \in \mathcal F_{j+\ell}(P)$ is a simplex. Since $x\notin \aff(F)$, $\pos(F-x)$ is a simplicial cone of dimension $j+\ell+1$. Hence, the conic Crofton formula~\eqref{eq:conic_crofton} yields

\begin{align}
	\E f_j(P\cap(x+L))
	&=
	2 \sum_{F \in \mathcal F_{j+\ell}(P)} \sum_{q=0}^{\lfloor j/2\rfloor} \upsilon_{\ell+1+2q} (\pos(F-x)).
	\label{eq:expected_faces_general_polytope}
\end{align}


\medskip
\noindent
\emph{Proof of (a).} Suppose $P = \conv(y_0, \ldots, y_n)$, where $y_0, \ldots, y_n$ satisfy~\eqref{eq:global_orthocentric_simplex}. The $(j+\ell)$-dimensional faces of $P$ are
precisely
$$
F_I := \conv(y_i:i\in I),
\qquad
I \subseteq \{0,\ldots,n\},
\qquad
|I| = j+\ell+1.
$$
Thus,~\eqref{eq:expected_faces_general_polytope} immediately gives~\eqref{eq:expected_faces_orthocentric_simplex}.

It remains to verify that Proposition~\ref{prop:intrinsic_volumes_orthocentric_cone} applies with the parameters in~\eqref{eq:parameters_orthocentric_face}.
Fix such a subset $I$.
Since $F_I$ is a proper face of $P$, the assumption on $x$ gives $x \notin \aff(y_i: i \in I)$. Consequently, the vectors $(y_i-x)_{i\in I}$ are linearly independent.

Note that
$
\pos(y_i-x:i\in I)
=
\pos(\tau_i(y_i-x): i\in I).
$
For $i,j\in I$, using~\eqref{eq:global_orthocentric_simplex}, we obtain
\begin{align*}
\lan \tau_i(y_i-x), \tau_j(y_j-x) \ran
&=
\tau_i \tau_j \left( \lan  y_i,y_j \ran - \lan y_i,x \ran - \lan y_j ,x \ran +\|x\|^2 \right)\\
&=
\delta_{ij} - \tau_i \tau_j \left( \lan y_i,x \ran + \lan y_j,x \ran + \frac{1}{s} - \|x\|^2 \right)
=
\delta_{ij} - t_i r_j - r_i t_j,
\end{align*}
with
$$
t=(\tau_i)_{i\in I}
\qquad \text{ and }
r
=
\left( \tau_i \left( \lan y_i,x\ran + \frac{1}{2s} -\frac{\|x\|^2}{2} \right) \right)_{i\in I}.
$$
Hence the Gram matrix of $(\tau_i(y_i-x))_{i\in I}$ is
\begin{align*}
\left(\lan \tau_i(y_i-x),\tau_j(y_j-x) \ran \right)_{i,j\in I}
&=
\Id_{|I|}-tr^T-rt^T.
\end{align*}
Proposition~\ref{prop:intrinsic_volumes_orthocentric_cone} therefore applies with precisely the parameters in~\eqref{eq:parameters_orthocentric_face}.

\medskip
\noindent
\emph{Proof of (b).}
Suppose
$
P
=
\conv (\tau_1^+ e_1, \ldots, \tau_n^+ e_n, -\tau_1^- e_1, \ldots, -\tau_n^- e_n).
$
The $(j+\ell)$-dimensional faces of $P$ are precisely the simplices
$$
F_{I,\eps}
=
\conv \left( \eps_i \tau_i^{[\eps_i]} e_i: i\in I \right),
\qquad
|I| = j+\ell+1,
\qquad
\eps \in \{-1, +1\}^I.
$$
Formula~\eqref{eq:expected_faces_asymmetric_crosspolytope} now follows directly from~\eqref{eq:expected_faces_general_polytope}.

It remains to verify that Proposition~\ref{prop:intrinsic_volumes_orthocentric_cone} applies with the parameters in~\eqref{eq:parameters_crosspolytope_face}. Fix such $I$ and $\eps$. Since $F_{I,\varepsilon}$ is a proper face of $P$, the assumption on $x$ gives
$$
x \notin \aff\left( \eps_i \tau_i^{[\eps_i]} e_i: i\in I \right).
$$

Hence, the vectors
$
\left( \eps_i \tau_i^{[\eps_i]} e_i -x \right)_{i\in I}
$
are linearly independent.

Note that
$$
\pos\left( \eps_i \tau_i^{[\eps_i]} e_i - x: i\in I \right)
=
\pos\left( \frac{1}{\tau_i^{[\eps_i]}} \left( \eps_i \tau_i^{[\eps_i]} e_i - x \right) :i\in I\right).
$$
As in part~(a), a direct computation shows that the Gram matrix of the latter generators is
$$
\Bigg( \bigg\lan
\frac{1}{\tau_i^{[\eps_i]}} \left( \eps_i \tau_i^{[\eps_i]} e_i - x \right)
,
\frac{1}{\tau_j^{[\eps_j]}} \left( \eps_j \tau_j^{[\eps_j]}  e_j - x \right)
\bigg\ran \Bigg)_{i,j\in I}
=
\Id_{|I|}-tr^T-rt^T,
$$
with
$$
t
=
\bigg(\frac{1}{\tau_i^{[\eps_i]}} \bigg)_{i\in I},
\qquad
r
=
\bigg(\eps_i x_i- \frac{\|x\|^2}{2\tau_i^{[\eps_i]}} \bigg)_{i\in I}.
$$
Proposition~\ref{prop:intrinsic_volumes_orthocentric_cone} therefore applies with precisely the parameters in~\eqref{eq:parameters_crosspolytope_face}.
\end{proof}

\begin{remark}[Random linear sections]
Taking $x=0$ in Theorem~\ref{theo:expected_faces_shifted_sections} gives formulas for random linear sections.
\begin{enumerate}
\item
In part~(a), the parameters in~\eqref{eq:parameters_orthocentric_face} satisfy $r=t/(2s)$. Hence, all solid angles entering the conic intrinsic volumes can be expressed as integrals of dimension at most one, using the linearly dependent cases of Proposition~\ref{prop:intrinsic_volumes_orthocentric_cone}.

If additionally $\tau_0=\ldots=\tau_n$, then $P$ is a regular
simplex centered at the origin, and
$$
\lan \tau_i y_i,\tau_k y_k\ran
=
\delta_{ik}-\frac1{n+1},
\qquad i,k\in\{0,\ldots,n\}.
$$
Thus, for $|I|=j+\ell+1$, the cones $\pos(y_i:i\in I)$
are isometric to those considered
in~\cite[Proposition~4.4]{KabluchkoSeidel2022}.
The outer sum in~\eqref{eq:expected_faces_orthocentric_simplex}
consists of $\binom{n+1}{j+\ell+1}$ equal terms,
so this formula reduces to the regular-simplex formula
in~\cite[Theorem~4.5]{KabluchkoSeidel2022}.

\item
In part~(b), $P$ is a crosspolytope. Here, the parameters in~\eqref{eq:parameters_crosspolytope_face} satisfy $r=0$. Since all $\tau_i^\pm$ are positive,
$$
\pos\left(\eps_i\tau_i^{[\eps_i]}e_i:i\in I\right)
=
\pos(\eps_i e_i:i\in I),
$$
which is a $(j+\ell+1)$-dimensional orthant. By~\eqref{eq:conic_intr_volume_general}, its $k$-th conic intrinsic volume is
$$
2^{-(j+\ell+1)}\binom{j+\ell+1}{k},
\qquad k\in\{0,\ldots,j+\ell+1\}.
$$
Hence,~\eqref{eq:expected_faces_asymmetric_crosspolytope} simplifies to
$$
\E f_j(P\cap L)
=
2 \binom{n}{j+\ell+1} \sum_{q=0}^{\lfloor j/2\rfloor} \binom{j+\ell+1}{\ell+1+2q}
=
2 \binom{n}{j+\ell+1} \sum_{h=\ell}^{j+\ell} \binom{j+\ell}{h},
$$
where the second equality follows by applying the identity
$$
\binom{j+\ell+1}{\ell+1+2q}
=
\binom{j+\ell}{\ell+2q}
+
\binom{j+\ell}{\ell+1+2q}
$$
to each summand. In particular, these expected face numbers are independent of the parameters $\tau_i^+,\tau_i^-$ and coincide with those of the regular crosspolytope given in~\cite[Theorem~4.5]{KabluchkoSeidel2022}.
\end{enumerate}
\end{remark}

\appendix
\section{Face-number identity for random sections}\label{sec:appendix}
In this appendix, we give a proof of the face-number identity~\eqref{eq:face_number_almost_sure_identity} for sections of polytopes. For a polytope $Q \subseteq \R^n$, let $\partial Q$ denote its boundary.

\begin{proposition}\label{prop:app_face_number_identity}
For $n\in \{2,3,\ldots\}$, let $Q \subseteq \R^n$ be a full-dimensional polytope such that $0 \notin \partial Q$, and let $\ell \in \{1, \ldots, n-1\}$. Let $L$ be distributed uniformly on the Grassmannian of $(n-\ell)$-dimensional subspaces of $\R^n$. Almost surely, for all $j\in \{0, \ldots, n-\ell - 1\}$,
\begin{equation}\label{eq:app_face_number_identity}
f_j(Q \cap L)
=
\sum_{G \in \mathcal F_{j+\ell} (Q)} \ind_{\{G \cap L \neq \varnothing\}}.
\end{equation}
\end{proposition}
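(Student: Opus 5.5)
We prove the identity for a fixed realisation of $L$ in general position, and then check that general position holds almost surely. Call $L$ \emph{generic} if, for every face $G$ of $Q$ (including $Q$ itself), the subspace $L$ and the affine hull $\aff(G)$ intersect transversally. Concretely, either $L\cap\aff(G)=\varnothing$, or $\dim(L\cap\aff(G))=\dim G-\ell$. In addition we require $L\cap \relbd(G)\cap \aff(G')=\varnothing$ whenever this is forced by dimension counting, i.e.\ $L$ misses every face $G$ with $\dim G<\ell$. Since there are finitely many faces, it suffices to show that for a single affine subspace $A$ of dimension $k$ not containing $0$, the uniform $(n-\ell)$-subspace $L$ almost surely meets $A$ transversally. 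Here the hypothesis $0\notin\partial Q$ enters. Every proper face $G$ lies in $\partial Q$, so $0\notin G$. However, $0$ may still lie in $\aff(G)$, and this case must be handled. If $0\in\aff(G)$, then $\aff(G)$ is a linear subspace, and a uniform $L$ intersects it in dimension exactly $\max(\dim G-\ell,0)$ almost surely; this is the standard fact that a uniform subspace is almost surely in general position with a fixed subspace. If $0\notin\aff(G)$, we pass to $\lin(\aff(G))$, of dimension $\dim G+1$. Then $L\cap\aff(G)$ is an affine slice of $L\cap\lin(\aff G)$, which almost surely has dimension $\dim G+1-\ell$ or is $\{0\}$, and the transversality follows. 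Both statements reduce to: the set of $(n-\ell)$-subspaces meeting a fixed $m$-subspace in more than the generic dimension is a proper algebraic subvariety of the Grassmannian, hence a null set.

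Next, fix a generic $L$ and describe the face lattice of $Q\cap L$. The faces of $Q\cap L$ are exactly the non-empty sets $G\cap L$, where $G$ ranges over faces of $Q$. We show the following.
\begin{enumerate}
\item If $G\cap L\neq\varnothing$ and $G$ is a proper face, then $G\cap L$ meets $\relint G$. Indeed, otherwise $L$ meets only $\relbd G$, hence some lower-dimensional face $G'$. By genericity applied to $G'$ and $G$, the intersection $L\cap\aff G$ would then have to be strictly larger than $L\cap \aff G'$ near that point, and convexity gives a point of $\relint G$. Consequently $\dim(G\cap L)=\dim G-\ell$.
\item Distinct faces $G$ with $G\cap L\neq\varnothing$ give distinct faces $G\cap L$. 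This holds because the relative interiors of the faces of $Q$ partition $Q$, and $\relint(G\cap L)=\relint G\cap L$ by the first point.
\end{enumerate}
Thus the $j$-faces of $Q\cap L$ correspond bijectively to the $(j+\ell)$-faces $G$ of $Q$ with $G\cap L\neq\varnothing$. We also check the top case $G=Q$. For $j\le n-\ell-1$ we have $j+\ell\le n-1$, so only proper faces are involved, and the correspondence gives exactly \eqref{eq:app_face_number_identity}. If $Q\cap L=\varnothing$, both sides vanish.

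The main obstacle is the first point: showing rigorously that a generic $L$ meeting $G$ meets its relative interior. When $0\notin\aff(G)$, the standard argument in \cite[Section~5.3]{KabluchkoSeidel2022} uses that $0$ lies in the interior, and this is where the weaker hypothesis $0\notin\partial Q$ must be used instead. My plan is as follows. Suppose $L\cap G\subseteq\relbd G$ is non-empty, and pick $y\in L\cap G'$ with $G'\subsetneq G$ a face of minimal dimension met by $L$. By genericity, $L\cap\aff G'$ has dimension $\dim G'-\ell\ge0$ and $L\cap\aff G$ has dimension $\dim G-\ell>\dim G'-\ell$. We then take a direction in $L\cap\lin(\aff G-y)$ not lying in $\aff G'-y$ and move from $y$ into $\relint G$ (or its negative). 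Whether one of these directions points inward is guaranteed, since $G'$ is a face of $G$ and the cone of feasible directions at $y$ spans $\aff G-y$. If this fails, I would instead argue via the cone $\pos(G)$, which is pointed because $0\notin G$, and use the conic general-position argument.
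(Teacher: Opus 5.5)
\textbf{Overall assessment.} Your architecture is the same as the paper's: almost-sure general position of $L$ with respect to all faces, then the deterministic bijection $G\mapsto G\cap L$ via relative interiors. The probabilistic part, the description of the faces of $Q\cap L$, injectivity and the dimension count all go through once you know that $G\cap L\neq\varnothing$ implies $L\cap\relint G\neq\varnothing$.

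\textbf{The gap.} That implication, which you correctly single out as the main obstacle, is not established.
\begin{itemize}
\item Your first sketch only compares $\dim(L\cap\aff G)$ with $\dim(L\cap\aff G')$. A dimension comparison alone does not produce a point of $\relint G$.
\item Your refined plan takes $y\in L\cap\relint G'$ and an arbitrary $d\in L\cap(\aff G-y)$ with $d\notin\aff G'-y$, and asserts that $d$ or $-d$ points into $\relint G$. This holds for every such $d$ only when $G'$ is a facet of $G$. If $\dim G-\dim G'\geq 2$, the tangent cone of $G$ at $y$ is $\aff G'-y$ plus a pointed cone of dimension at least two, and both $\pm d$ can miss it.
\item Concretely, identify $\aff G$ with $\R^3$ and take $G=[0,1]^3$, $G'=[0,1]\times\{0\}\times\{0\}$, $y=(\tfrac12,0,0)$, $L\cap\aff G=\{\tfrac12\}\times\R^2$ and $d=(0,1,-1)$. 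Then $y\pm\varepsilon d\notin G$ for every $\varepsilon>0$.
\item The fallback via $\pos(G)$ is not an argument.
\end{itemize}

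\textbf{The missing idea.} Use genericity as a spanning statement rather than a dimension inequality. Since $y\in L\cap\aff G'$ and $\dim(L\cap(\aff G'-y))=\dim G'-\ell$, you get $L+(\aff G'-y)=\R^n$. Choose $z\in\relint G$ and write $z-y=v+u$ with $v\in L$ and $u\in\aff G'-y$. For small $t\in(0,1)$, the point $y-\tfrac{t}{1-t}u$ lies in $\relint G'\subseteq G$, because $y\in\relint G'$. Hence
\[
y+tv=(1-t)\bigl(y-\tfrac{t}{1-t}u\bigr)+tz\in L\cap\relint G.
\]
This is exactly Lemma~\ref{lem:app_relative_interior_intersection}. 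Alternatively, you can salvage your $\pm d$ argument by climbing a chain $G'=F_0\subset F_1\subset\cdots\subset F_k=G$ in which each $F_i$ is a facet of $F_{i+1}$. At each step, apply genericity to the pair $F_i,F_{i+1}$; there the half-space picture is valid.

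\textbf{Two smaller points.}
\begin{itemize}
\item This step does not use $0\notin\partial Q$ at all. The hypothesis is needed only to ensure $L\cap G=\varnothing$ when $\dim G<\ell$.
\item Your definition of ``generic'' (either $L\cap\aff G=\varnothing$ or $\dim(L\cap\aff G)=\dim G-\ell$) can never hold when some face with $\dim G<\ell$ has $0\in\aff G$, since then $0\in L\cap\aff G$. This is precisely the situation the weaker hypothesis permits. For such faces the condition must be replaced by $L\cap G=\varnothing$, which your own probabilistic argument in fact delivers.
\end{itemize}
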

Here, we use the convention $f_j(\varnothing) = 0$.
Under the stronger assumption \(0\in\operatorname{Int}(Q)\), this identity is~\cite[Proposition~4.1]{KabluchkoSeidel2022}.
We follow the strategy of their proof in \cite[Section~5.3]{KabluchkoSeidel2022}. Its first step, \cite[Proposition~5.5]{KabluchkoSeidel2022}, is to show that \(L\) is almost surely in general position with respect to the affine hulls of the faces of \(Q\). In particular, for every face \(G\), this implies
\[
L\cap\operatorname{aff}(G)=\varnothing
\qquad\text{if }\dim G<\ell,
\]
and
\[
L\cap \aff(G) \neq \varnothing
\text{ and }
\dim\bigl(L\cap \aff(G)\bigr)
=
\dim G-\ell
\qquad\text{if }\dim G\geq\ell.
\]

Under our weaker assumption $0\notin\partial Q$, the first property may fail at the level of affine hulls: if $0\in\operatorname{aff}(G)$, then $L\cap\operatorname{aff}(G)$ necessarily contains the origin. Nevertheless, we shall show that, almost surely, $L$ does not intersect the face $G$ itself whenever $\dim G<\ell$. The next lemma makes this precise and provides the replacement for \cite[Proposition~5.5]{KabluchkoSeidel2022} needed here.

\begin{lemma}\label{lem:app_general_position_assumption}
Let $Q \subseteq \R^n$ be an $n$-dimensional polytope such that $0 \notin \partial Q$, let $\ell \in \{1,\ldots,n-1\}$, and let $L$ be uniformly distributed on the Grassmannian of $(n-\ell)$-dimensional linear subspaces of $\R^n$. Then, almost surely, the following properties hold simultaneously for every face $G$ of $Q$.
\begin{enumerate}
\item If $\dim(G) < \ell$, then
$
L \cap G
=
\varnothing.
$

\item If $\dim(G) \geq \ell$, then
$
L \cap \aff(G)
\neq
\varnothing
$
and
$
\dim(L \cap \aff(G))
=
\dim(G) - \ell.
$
\end{enumerate}
\end{lemma}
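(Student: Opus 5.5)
Since $Q$ has finitely many faces, it suffices to prove each property for a single fixed face $G$ with probability one and then take a finite union of null events. The tool is the standard fact that if $A\subseteq\R^n$ is a fixed affine subspace, then almost surely $L$ is in general position with respect to $A$. Concretely, if $A$ does not contain $0$, then $\lin(A)$ has dimension $\dim A+1$, and $L\cap A\neq\varnothing$ iff $L\cap\lin(A)$ contains a vector outside the direction space $A-A$ of $A$. We represent $L=\rho L_0$ with $\rho$ Haar-distributed on $SO(n)$, and use that for fixed linear subspaces $M$, the event $\dim(L\cap M)\neq\max\{0,\dim M-\ell\}$ is contained in the zero set of a non-trivial polynomial (a minor of a matrix whose columns are bases of $L$ and $M$) on the connected algebraic group $SO(n)$. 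Such a set has Haar measure zero.

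\emph{Case $0\notin\aff(G)$.} This is essentially \cite[Proposition~5.5]{KabluchkoSeidel2022}. Put $M=\lin(G)=\lin(\aff G)$, of dimension $\dim G+1$, and let $D=\aff(G)-\aff(G)\subseteq M$ be the direction space, a hyperplane in $M$. Almost surely $\dim(L\cap M)=\max\{0,\dim G+1-\ell\}$ and $\dim(L\cap D)=\max\{0,\dim G-\ell\}$.

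If $\dim G<\ell$, then $L\cap M=\{0\}$, so $L\cap\aff(G)=\varnothing$ and in particular $L\cap G=\varnothing$, which gives (a).

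If $\dim G\ge\ell$, then $L\cap M$ has dimension exactly one more than $L\cap D$. Hence $L\cap M\not\subseteq D$, so $L\cap M$ contains a vector on the affine hyperplane $\aff(G)$ of $M$. This gives $L\cap\aff(G)\neq\varnothing$. Moreover, $L\cap\aff(G)$ is a translate of $L\cap D$, so its dimension is $\dim G-\ell$, which gives (b).

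\emph{Case $0\in\aff(G)$.} Here $\aff(G)=\lin(G)$ is a linear subspace of dimension $\dim G$.

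For (b): almost surely $\dim(L\cap\aff G)=\max\{0,\dim G-\ell\}$, and the intersection is nonempty because it contains $0$. So (b) holds.

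For (a), which is the main point: $0\in\aff(G)$ but $0\notin G$, since $G\subseteq\partial Q$ for a proper face and $0\notin\partial Q$. (If $G=Q$, then $\dim G=n\ge\ell$ and (a) is vacuous.) Because $G$ is a compact convex set not containing $0$, the cone $K=\pos(G)$ is a closed convex cone inside $\aff(G)$. We may write $K\setminus\{0\}=\{\lambda g:\lambda>0,\ g\in G\}$. If $L\cap G\neq\varnothing$, then $L\cap\aff(G)\neq\{0\}$, and this has probability zero when $\dim G<\ell$ by the dimension count $\dim G+(n-\ell)<n$. So (a) holds in this case as well.

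The obstacle I expect is the measure-zero statement itself, namely that the dimension of the intersection of a uniform $L$ with a fixed subspace is generic almost surely. I would verify it via the polynomial (minor) argument on $SO(n)$, or alternatively by realizing $L$ as the span of $n-\ell$ i.i.d. standard Gaussian vectors and conditioning. Once that is in place, the case distinction above, together with the observation that $0\notin G$ for proper faces, finishes the proof.
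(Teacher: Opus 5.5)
Your proof is correct and follows essentially the same route as the paper. It uses the almost-sure generic dimension $\dim(L\cap W)=\max\{0,\dim W-\ell\}$ for fixed linear subspaces $W$, a finite union over faces, and the observation $0\notin G$ for proper faces when $0\in\aff(G)$. The paper avoids your case split by using $\dim\lin(G)\le\dim G+1\le\ell$ uniformly for (a), and gets nonemptiness in (b) from $L+(\aff(G)-x_G)=\R^n$, which needs general position only with respect to the direction space rather than also $\lin(G)$. Your remark about $\pos(G)$ is not needed.
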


\begin{proof}
For every fixed linear subspace \(W\subseteq\R^n\),~\cite[Lemma~13.2.1]{SW08} (or also~\cite[Lemma~2.1]{GoodeySchneider1980}) says that
\begin{equation}\label{eq:app_lemma_general_pos_intersection}
\dim(L\cap W)
=
\max(0, \dim(L) + \dim(W) - n)
=
\max(0, \dim(W) - \ell )
\qquad\text{almost surely}.
\end{equation}

Since $Q$ has only finitely many faces,  it suffices to fix a face $G$ of $Q$ and to show that the assertion applicable to $G$ holds almost surely. So let $G$ be a face of $Q$.

\medskip
\noindent
\emph{Part (a).} First suppose $\dim(G)<\ell$. Then,
$$
\dim(\lin G)
=
\left\{
\begin{aligned}
&\dim(G), && \text{ if } 0 \in \aff(G)\\
&\dim(G)+1, && \text{ if } 0 \notin \aff(G)
\end{aligned}
\right\}
\leq
\dim(G)+1
\leq
\ell.
$$
Applying~\eqref{eq:app_lemma_general_pos_intersection} with $W = \lin(G)$, it  follows that almost surely
$$
\dim(L \cap \lin(G))
=
\max(0, \dim(\lin G) - \ell)
=
0,
$$
and hence $L \cap \lin(G) = \{0\}$, and in particular $L \cap G \subseteq \{0\}$.
Since $\dim(G) < \ell \leq n-1$,  $G$ is a proper face of $Q$. Thus, $G \subseteq \partial Q$, so $0 \notin G$. Therefore, $L \cap G = \varnothing$ almost surely, which proves~(a).

\medskip
\noindent
\emph{Part (b).} Now suppose $\dim(G)\geq\ell$, and let $x_G\in G$. The set
$\aff(G)-x_G\subseteq\R^n$ is a linear subspace of the same dimension as $G$, so
\eqref{eq:app_lemma_general_pos_intersection}, applied with $W=\aff(G)-x_G$, gives
\begin{equation}\label{eq:app_dim_translated_face}
\dim\bigl(L\cap(\aff(G)-x_G)\bigr)
=
\max(0,\dim(G)-\ell)
=
\dim(G)-\ell
\qquad\text{almost surely}.
\end{equation}
From now on we restrict to the almost-sure event on which
\eqref{eq:app_dim_translated_face} holds. On this event,
$$
\dim\bigl(L+(\aff(G)-x_G)\bigr)
=
\dim(L)+\dim(G)-\dim\bigl(L\cap(\aff(G)-x_G)\bigr)
=
n,
$$
and hence $L+(\aff(G)-x_G)=\R^n$.

Let us now first show that $L \cap \aff(G)$ is non-empty. For this, note that  $L+(\aff(G)-x_G)=\R^n$ entails that $x_G$ can be written as $x_G=z+w$ for some $z\in L$ and $w \in \aff(G)-x_G$. Hence,
$$
z
=
x_G-w
\in
x_G + (\aff(G)-x_G)
=
\aff(G),
$$
so $z \in L \cap \aff(G)$, and therefore $L \cap \aff(G) \neq \varnothing$.
As the non-empty intersection of two affine subspaces, $L \cap \aff(G)$ is again an affine subspace of $\R^n$. It remains to determine its dimension.

With $z\in L\cap\aff(G)$ from above, we have
$$
\bigl(L\cap\aff(G)\bigr)-z
=
(L-z)\cap\bigl(\aff(G)-z\bigr)
=
L\cap\bigl(\aff(G)-x_G\bigr).
$$
Indeed, $L-z=L$ because $z\in L$, and $\aff(G)-z=\aff(G)-x_G$ because $z,x_G\in\aff(G)$.
Taking dimensions and using \eqref{eq:app_dim_translated_face}, we obtain
$$
\dim\bigl(L\cap\aff(G)\bigr)
=
\dim\bigl(L\cap(\aff(G)-x_G)\bigr)
=
\dim(G)-\ell .
$$
Thus $G$ satisfies (b) almost surely.
\end{proof}

As in~\cite[Proposition~5.6]{KabluchkoSeidel2022}, we isolate the deterministic part of the proof of Proposition~\ref{prop:app_face_number_identity} in the following three statements.

\begin{lemma}\label{lem:app_3_statements_for_face_identity}
Let $Q \subseteq\R^n$  be an $n$-dimensional polytope such that $0 \notin \partial Q$, let $\ell \in \{1,\ldots,n-1\}$, and let $L\subseteq \R^n$ be an $(n-\ell)$-dimensional linear subspace. 
Suppose that properties (a) and (b) of Lemma~\ref{lem:app_general_position_assumption} hold for every face
$G$ of $Q$.
Then, the following statements hold for every $j \in \{0,\ldots,n-\ell-1\}$.
\begin{enumerate}
\item[(1)] If $G_1,G_2 \in \mathcal F_{j+\ell}(Q)$ are distinct faces such that
$
G_1 \cap L \neq \varnothing
$
and
$
G_2 \cap L \neq \varnothing,
$
then
$$
G_1 \cap L
\neq
G_2\cap L.
$$

\item[(2)] If $G \in \mathcal F_{j+\ell}(Q)$ and $G \cap L \neq \varnothing$, then
$$
G \cap L\in \mathcal F_j(Q\cap L).
$$

\item[(3)] If $A\in \mathcal F_j(Q\cap L)$, then there exists $G \in \mathcal F_{j+\ell}(Q)$ such that
$$
A
=
G\cap L.
$$
\end{enumerate}
\end{lemma}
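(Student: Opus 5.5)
The plan is to follow the deterministic argument of \cite[Proposition~5.6]{KabluchkoSeidel2022}. The only change is that every use of ``$0\notin\aff(G)$ for low-dimensional faces'' is replaced by property~(a) of Lemma~\ref{lem:app_general_position_assumption}, which only says $L\cap G=\varnothing$ when $\dim G<\ell$. The basic tool is the standard description of the faces of a section of a polytope by an affine subspace. Every face of $Q\cap L$ has the form $G\cap L$ for some face $G$ of $Q$. If $G\cap L\neq\varnothing$, we may take $G$ to be the unique face of $Q$ whose relative interior meets $L$ at a relative interior point of the face $A$ of $Q\cap L$ in question. For such a minimal $G$ we have $\relint(G)\cap L\neq\varnothing$, hence $\aff(G\cap L)=\aff(G)\cap L$ and $\dim(G\cap L)=\dim(\aff(G)\cap L)$.

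For~(2), let $G\in\mathcal F_{j+\ell}(Q)$ with $G\cap L\neq\varnothing$. The set $G\cap L$ is automatically a face of $Q\cap L$, since it is the intersection of $Q\cap L$ with a supporting hyperplane of $Q$ at $G$. It remains to show that it has dimension $j$. I would show that $L$ meets $\relint(G)$. Indeed, $G\cap L$ lies in the union of the relative interiors of faces $G'\subseteq G$. If $G\cap L$ met no point of $\relint(G)$, then it would lie inside proper faces $G'$ of $G$ with $G'\cap L\neq\varnothing$. By~(a) these satisfy $\dim G'\geq\ell$, and by~(b) the set $\aff(G')\cap L$ has dimension $\dim G'-\ell<\dim G-\ell$. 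On the other hand, I would argue that $G\cap L$ contains a point of $\relint(G)$ by a perturbation argument. Take a point $z\in G'\cap L$ in the relative interior of some minimal such $G'$. The affine space $\aff(G)\cap L$ has dimension $\dim G-\ell>\dim(\aff(G')\cap L)$, so it contains directions leaving $\aff(G')$. Moving from $z$ in such a direction, or its negative, enters $\relint$ of a larger face within $G$. Repeating this reaches $\relint(G)$. Then $\dim(G\cap L)=\dim(\aff(G)\cap L)=j$ by~(b).

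For~(1), suppose $G_1\cap L=G_2\cap L=:A\neq\varnothing$. By the argument in~(2), $A$ meets both $\relint(G_1)$ and $\relint(G_2)$. Since relative interiors of distinct faces of $Q$ are disjoint, this forces $G_1=G_2$.

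For~(3), let $A\in\mathcal F_j(Q\cap L)$ and pick $z\in\relint(A)$. Let $G$ be the unique face of $Q$ with $z\in\relint(G)$. Then $A=G\cap L$, because $G\cap L$ is a face of $Q\cap L$ whose relative interior contains $z$. By~(a), $\dim G\geq\ell$, since $z\in G\cap L$. By~(b) and $z\in\relint(G)\cap L$, we get $j=\dim A=\dim(\aff(G)\cap L)=\dim G-\ell$, so $G\in\mathcal F_{j+\ell}(Q)$. One case needs care: if $j=0$ and $\dim G=\ell$ with $\ell=n$, then $G=Q$. This cannot occur because $\ell\leq n-1$, and $j\leq n-\ell-1$ likewise ensures $G$ is proper.

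The main obstacle is the perturbation step in~(2), namely showing that $L$ meets $\relint(G)$ whenever it meets $G$. I would make it rigorous by taking $z\in G\cap L$ lying in $\relint(G')$ for a face $G'\subseteq G$ of maximal dimension among those meeting $L$, and deriving a contradiction via~(b) if $G'\neq G$.
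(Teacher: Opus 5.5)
\textbf{There is a genuine gap.} Your overall plan matches the paper's. All the content lies in showing that $G\cap L\neq\varnothing$ implies $L\cap\relint(G)\neq\varnothing$; the paper isolates this as Lemma~\ref{lem:app_relative_interior_intersection}, and after it (1)--(3) go through as you describe. Your argument for that step does not work as written. From (b) you only extract $\dim(\aff(G)\cap L)>\dim(\aff(G')\cap L)$, that is, $L\cap(\aff(G)-z)$ contains \emph{some} direction $w\notin\aff(G')-z$. You then assert that moving from $z$ along $w$ or $-w$ enters $G$. This holds when $G'$ is a facet of $G$, since then $G$ is locally a half-space of $\aff(G)$ near $z$. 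It is false when $\dim G-\dim G'\geq 2$. Take an edge $G'$ of a $3$-dimensional face $G$, which occurs e.g.\ for $n=4$, $\ell=1$, $j=2$. At $z\in\relint(G')$, the directions pointing into $G$ form, modulo the edge direction, a planar wedge of angle less than $\pi$. Any $w\in L\cap(\aff(G)-z)$ whose image lies outside this wedge and its negative leaves $G$ both ways, and such $w$ exist. Nothing in your argument selects a good $w$. Your last paragraph (maximal $G'$, ``contradiction via (b)'') does not say where the contradiction comes from. Your inequality only shows that the image of $L\cap(\aff(G)-z)$ in $(\aff(G)-z)/(\aff(G')-z)$ is non-zero; what is needed is that this image is everything.

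\textbf{The missing idea is transversality.} Since $z\in L\cap\aff(G')$, property (b) for $G'$ gives $\dim\bigl(L\cap(\aff(G')-z)\bigr)=\dim G'-\ell$. Hence $\dim\bigl(L+(\aff(G')-z)\bigr)=(n-\ell)+\dim G'-(\dim G'-\ell)=n$, so $L+(\aff(G')-z)=\R^n$. Now take $y\in\relint(G)$ and write $y-z=v+u$ with $v\in L$ and $u\in\aff(G')-z$. For small $t\in(0,1)$ we have $z-\tfrac{t}{1-t}u\in\relint(G')\subseteq G$. Therefore $z+tv=(1-t)\bigl(z-\tfrac{t}{1-t}u\bigr)+ty\in L\cap\relint(G)$. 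This is the paper's proof; it reaches $\relint(G)$ in one step, with no iteration and no choice of sign.

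\textbf{Alternative repair.} You can rescue your own iteration by climbing one dimension at a time. Pick a face $G''$ with $G'\subsetneq G''\subseteq G$ and $\dim G''=\dim G'+1$, and apply (b) to $G''$ instead of $G$. Your ``$w$ or $-w$'' step is then correct, because $G'$ is a facet of $G''$.

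\textbf{A minor point in (1).} Knowing that $A$ meets $\relint(G_1)$ and $\relint(G_2)$, possibly at different points, does not by itself contradict disjointness of relative interiors. Argue instead that a point of $L\cap\relint(G_1)$ lies in $A\subseteq G_2$, so $G_1\subseteq G_2$, and equal dimension forces $G_1=G_2$.
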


\begin{proof}[Proof of Proposition~\ref{prop:app_face_number_identity} assuming Lemma~\ref{lem:app_3_statements_for_face_identity}]

By Lemma~\ref{lem:app_general_position_assumption}, almost surely properties (a) and (b) of that lemma hold simultaneously for every face of $Q$. Fix a subspace $L$ with these properties. For every $j \in \{0,\ldots,n-\ell-1\}$, statements (1), (2), and (3) of Lemma~\ref{lem:app_3_statements_for_face_identity} show that the map
$$
\left\{ G\in \mathcal F_{j+\ell}(Q): G \cap L\neq\varnothing \right\}
\longrightarrow
\mathcal F_j(Q\cap L),
\qquad
G\mapsto G\cap L
$$
is well-defined (by (2)), injective (by (1)), and surjective (by (3)),
hence a bijection. 
Thus,
$$
f_j(Q\cap L)
=
\#\left\{ G \in \mathcal F_{j+\ell}(Q): G\cap L \neq \varnothing \right\}
=
\sum_{G \in \mathcal F_{j+\ell}(Q)} \ind_{\{G\cap L \neq \varnothing\}},
$$
which proves~\eqref{eq:app_face_number_identity}.
\end{proof}

A key step in the proof of~\cite[Proposition~5.6]{KabluchkoSeidel2022} is~\cite[Lemma~5.7]{KabluchkoSeidel2022}. Applied to a face inside its affine hull, that lemma shows that a face intersected by $L$ is also intersected in its relative interior. Under the modified general-position assumptions used here, the corresponding statement is the following.

\begin{lemma}\label{lem:app_relative_interior_intersection}
Let $Q$, $\ell$, and $L$ satisfy the assumptions of Lemma~\ref{lem:app_3_statements_for_face_identity}. Then, for every face $G$ of $Q$,
\begin{equation}\label{eq:app_relative_interior_intersection}
G \cap L
\neq
\varnothing
\quad\Longrightarrow\quad
L\cap\relint(G)
\neq
\varnothing.
\end{equation}
\end{lemma}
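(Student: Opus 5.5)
We argue by contradiction. Let $G$ be a face of $Q$ with $G\cap L\neq\varnothing$. By part (a) of Lemma~\ref{lem:app_general_position_assumption}, $\dim(G)\geq\ell$. Suppose that $L\cap\relint(G)=\varnothing$. We will produce a proper face $G'$ of $G$ of dimension at most $\dim(G)-1$ whose affine hull meets $L$ in too large a set, which violates part (b) of Lemma~\ref{lem:app_general_position_assumption} (or, when $\dim G'<\ell$, part (a)).

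Put $A:=L\cap\aff(G)$. By (b), $A$ is an affine subspace of dimension $\dim(G)-\ell$. The set $A\cap G=L\cap G$ is a non-empty convex subset of $A$ that misses $\relint(G)$. Since $\relint(G)$ is relatively open in $\aff(G)$ and $A\cap G$ is convex, a separation argument inside $\aff(G)$ gives a supporting hyperplane $H$ of $G$ (relative to $\aff(G)$) with $A\cap G\subseteq H$ and $\relint(G)\cap H=\varnothing$. Then $G':=G\cap H$ is a proper face of $G$, hence a face of $Q$, and $L\cap G\subseteq G'$. Equivalently, one can take $G'$ to be the unique face of $G$ whose relative interior contains a point of $\relint(A\cap G)$; this is the minimal face containing $A\cap G$.

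The key step is to show that $A\subseteq\aff(G')$, i.e.\ that the whole affine space $A$, and not only $A\cap G$, lies in $\aff(G')$. If $A\cap G$ has full dimension $\dim A$, this is clear, since $A\cap G\subseteq G'$ forces $A=\aff(A\cap G)\subseteq\aff(G')$. This full-dimensionality is the main obstacle. Suppose that $A\cap G$ is lower-dimensional in $A$. Then $A$ meets $G$ only at the relative boundary of $G$, in a way that does not cross into $\relint(G)$. Take a point $a$ in the relative interior of $A\cap G$. If $\dim(A\cap G)<\dim A$, some line in $A$ through $a$ leaves $G$ immediately in both directions. Hence $a$ lies in a face $G''$ of $G$ that $A$ touches only at a lower-dimensional set. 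To handle this, I would instead argue by induction on $\dim G$, applied to the face $G'$, which is the minimal face containing $A\cap G$. We have $L\cap G'=A\cap G\neq\varnothing$, and by (b) applied to $G'$ the set $L\cap\aff(G')$ has dimension $\dim(G')-\ell$. By the induction hypothesis, $L$ meets $\relint(G')$.

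Once the dimensions are in hand, we obtain the contradiction. From $A\supseteq L\cap\aff(G')$ we have $\dim(L\cap\aff(G'))\leq\dim A=\dim(G)-\ell$. Let $c\in L\cap\relint(G')$. Since $c$ lies in $\relint(G')\subseteq\partial_{\rm rel}G$, a neighborhood of $c$ in $A$ lies on one side of $H$. Combined with the fact that $A$ is an affine space containing $c$ and is not contained in $H$, this produces points of $A$ strictly on the $G$-side of $H$ arbitrarily close to $c$. These points lie in $\relint(G)$ once we stay close to $c$ within $\aff(G')$ directions. Making this last step rigorous, by showing that $A\not\subseteq H$ yields a point of $A\cap\relint(G)$ near $c$, is where I expect the real work to lie. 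I would handle it using the transversality $L+(\aff(G)-x_G)=\R^n$ from the proof of Lemma~\ref{lem:app_general_position_assumption}, together with the count $\dim(L\cap\aff(G'))=\dim(G')-\ell<\dim(G)-\ell=\dim A$. This count shows that $A\not\subseteq\aff(G')\subseteq H$. Convexity of $G$ near the relative interior point $c$ of the facet-like face $G'$ then gives a segment from $c$ into $\relint(G)$ along a direction in $A$ pointing to the correct side of $H$; such a direction exists because $A$ is an affine space, so both directions are available.
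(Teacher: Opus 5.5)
There is a genuine gap, and it sits exactly at the step you flag as ``the real work''. Your first case ($A\cap G$ of full dimension in $A$) is handled correctly. The lower-dimensional case, however, is where all the content lies, and there your sketch does not go through.

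You take $c\in L\cap\relint(G')$ and a direction of $A$ lying strictly on the $G$-side of $H$, and conclude that moving from $c$ in that direction enters $\relint(G)$. That holds only when $G'$ is a facet of $G$. In general, near $c\in\relint(G')$ the set $G$ coincides with $c$ plus its cone of feasible directions. That cone is the sum of the linear space $\aff(G')-c$ and a pointed cone of dimension $\dim G-\dim G'$. When $\dim G-\dim G'\geq 2$ it is much smaller than the half-space bounded by $H$. For example, if $G'$ is an edge of a $3$-polytope $G$, then transversally to the edge one sees a wedge of angle less than $\pi$, not a half-plane. So a direction of $A$ on the correct side of $H$ can point out of $G$, and the remark that both directions are available in $A$ does not repair this.

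Separately, the inference ``$A\not\subseteq\aff(G')\subseteq H$, hence $A\not\subseteq H$'' is a non sequitur. The conclusion $A\not\subseteq H$ is true, but it needs a separate dimension count inside $H$ using (b) for $G'$. Even then it is not enough, for the reason above.

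The missing idea is to use transversality of $L$ with $\aff(G')$, not with $\aff(G)$. Since $L\cap G'\neq\varnothing$, (a) gives $\dim G'\geq\ell$. Since $c\in L\cap\aff(G')$, (b) for $G'$ gives $\dim\bigl(L\cap(\aff(G')-c)\bigr)=\dim G'-\ell$. Hence $\dim\bigl(L+(\aff(G')-c)\bigr)=n$, that is, $L+(\aff(G')-c)=\R^n$.

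Now pick $y\in\relint(G)$ and write $y-c=v+u$ with $v\in L$ and $u\in\aff(G')-c$. Because $c\in\relint(G')$, the point $c-\frac{t}{1-t}u$ lies in $G'\subseteq G$ for all small $t\in(0,1)$. Then $c+tv=(1-t)\bigl(c-\frac{t}{1-t}u\bigr)+ty$ lies in $\relint(G)\cap L$. The $\aff(G')$-component of the direction is absorbed by the relative interiority of $c$ in $G'$, which is exactly what your half-space argument cannot capture.

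This is the paper's proof. It runs directly from any $x\in G\cap L$ and the face $F$ of $G$ with $x\in\relint(F)$. With it, the contradiction set-up, the separating hyperplane $H$, and the induction on $\dim G$ all become unnecessary.
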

\begin{proof}
Fix a face $G$ of $Q$ such that $G \cap L \neq \varnothing$. Choose $x\in G \cap L$. Since $G$ is the disjoint union of the relative interiors of its faces, there exists a unique face $F$ of $G$ (hence $F$ is also a face of $Q$) such that $x \in \relint(F)$.

Since $F$ is a face of $Q$ and $L \cap F \neq \varnothing$, property~(a) of Lemma~\ref{lem:app_general_position_assumption}, which holds by assumption, implies $\dim(F) \geq \ell$.

If $G=F$, then $x\in L \cap \relint(G)$, so~\eqref{eq:app_relative_interior_intersection} holds. We may therefore assume that $F\subsetneqq G$.

In order to be able to construct a point in $L \cap \relint(G)$, we first show that $L+(\aff(F)-x)=\R^n$. Indeed, since $x\in L\cap\aff(F)$, we have
$
L\cap\aff(F)
=
x + (L \cap (\aff(F)-x)).
$
Property (b) of Lemma~\ref{lem:app_general_position_assumption}, which holds by assumption, therefore gives
$$
\dim(L \cap (\aff(F)-x))
=
\dim(L \cap \aff(F))
=
\dim F-\ell.
$$
Consequently,
\begin{align*}
\dim (L +  (\aff(F)-x) )
=
\dim(L) + \dim(F) -\dim(L \cap (\aff(F)-x) )
=
n,
\end{align*}
and hence indeed
$$
L+(\aff(F)-x)
=
\R^n.
$$

With this, we can now construct a point in $L \cap \relint(G)$, see also Figure~\ref{fig:app_proof_lemma}.

Choose $y \in \relint(G)$. By the preceding equality, there exist $v\in L$ and $u \in \aff(F)-x$ such that $y-x = v+u$.

Since $x \in \relint(F)$, there exists $\eps>0$ such that
$$
x+\left\{ z\in \aff(F)-x: \|z\|<\eps \right\}
=
\left\{w\in \R^n: \|w-x\| <\eps \right\} \cap \aff(F)
\subseteq
\relint(F).
$$
Since $u \in \aff(F)-x$, it follows that for all sufficiently small \(t\in(0,1)\),
$$
x - \frac{t}{1-t}u \in \relint(F)
\subseteq
G.
$$
For such $t$, we have
\begin{align*}
x+tv
=
x+t(y-x-u)
=
(1-t)\left(x-\frac{t}{1-t}u\right)+ty.
\end{align*}
Since $x - \frac{t}{1-t}u \in G$, $y\in\relint(G)$, and $t\in(0,1)$, the last expression belongs to $\relint(G)$. Here we used the fact that the open segment joining a point of a convex set to a point of its relative interior is contained in its relative interior. Hence,
$
x+tv\in\relint(G).
$
Moreover, since $x,v\in L$ and $L$ is a linear subspace, we have
$x+tv\in L$. Consequently,
$$
x+tv\in L\cap\relint(G),
$$
and thus $L\cap\relint(G)\neq\varnothing$. This proves~\eqref{eq:app_relative_interior_intersection}.

\begin{figure}[ht]
\centering
\begin{tikzpicture}[
    scale=1.15,
    >=Latex,
    every node/.style={font=\small}
]
\coordinate (A)  at (0.75,2);
\coordinate (B)  at (3.25,2);
\coordinate (C)  at (2,4.7);
\coordinate (X)  at (2,2);
\coordinate (Y)  at (2,3.3);
\coordinate (XV) at (3.3,3.3);
\coordinate (P)  at (2.65,2);
\coordinate (Q)  at (2.433,2.433);

\draw (A)--(B)--(C)--cycle;
    
\draw[very thick] (A)--(B);
\node[below=1pt of A] {$F$};
\node at (2,4) {$G$};

\draw[black,densely dashed]
    (1.,1.)--(4.5,4.5)
    node[right] {$L$};

\draw[black] (P)--(Y);


\draw[black, densely dashed,->]
    (XV)--(Y)
    node[midway,above] {$u$};


\fill (X) circle (1.5pt)
    node[below] {$x$};

\fill (Y) circle (1.5pt)
    node[left] {$y$};

\fill (XV) circle (1.5pt)
    node[right] {$x+v$};


\fill (P) circle (1.5pt)
    node[anchor=north west,xshift=-8pt,yshift=3pt]
    {$x-\frac{t}{1-t}\,u$};

\fill (Q) circle (1.5pt)
    node[right=3pt] {$x+tv$};

\end{tikzpicture}

\caption{The construction in the proof of Lemma~\ref{lem:app_relative_interior_intersection}.}
\label{fig:app_proof_lemma}
\end{figure}
\end{proof}

We are now in a position to prove Lemma~\ref{lem:app_3_statements_for_face_identity}, and thereby complete the proof of Proposition~\ref{prop:app_face_number_identity}.
The proofs of (1) and (2) follow the corresponding arguments in~\cite[proof of Proposition~5.6]{KabluchkoSeidel2022}. For (3), we give an alternative argument based on relative interiors.

\begin{proof}[Proof of Lemma~\ref{lem:app_3_statements_for_face_identity}]
\emph{Proof of (1).} Let $G_1, G_2$ be as in the statement. By Lemma~\ref{lem:app_relative_interior_intersection}, there exists $x\in L \cap \relint(G_1)$. Since  $G_1$ and $G_2$ are distinct faces of equal dimension, $\relint(G_1)$ and $G_2$ are disjoint. Since $x\in \relint(G_1)$, we have $x\notin G_2$. Thus, we have $x\in L \cap G_1$ and $x\notin L \cap G_2$, so $L \cap G_1 \neq L \cap G_2$.

\medskip
\noindent
\emph{Proof of (2).}
Let $G\in\mathcal F_{j+\ell}(Q)$ satisfy $G\cap L\neq\varnothing$.
First, $G\cap L$ is a face of $Q\cap L$. Indeed, suppose that
$z_1,z_2\in Q\cap L$ and $\lambda\in(0,1)$ satisfy
$$
(1-\lambda)z_1+\lambda z_2\in G\cap L.
$$
Since $G$ is a face of $Q$, it follows that $z_1,z_2\in G$. Hence, $z_1,z_2\in G\cap L$, which proves, by definition, that $G\cap L$ is a face of $Q\cap L$.

It remains to determine its dimension. By
Lemma~\ref{lem:app_relative_interior_intersection}, we may choose
$x\in L\cap\relint(G)$. Since $\dim(G)=j+\ell\geq\ell$, property~(b)
of Lemma~\ref{lem:app_general_position_assumption} gives
$$
\dim(L\cap \aff(G))
=
\dim(G) - \ell
=
j.
$$
Moreover, since $\relint(G)$ is relatively open in $\aff(G)$ and
$x\in\relint(G)$, there exists $\eps>0$ such that
$$
\left\{ z \in \aff(G): \|z-x\|<\eps \right\}
\subseteq
\relint(G)
\subseteq
G.
$$
Intersecting with $L$, we obtain
$$
\left\{ z\in L \cap \aff(G): \|z-x\| < \eps\right\}
\subseteq
G\cap L
\subseteq
L \cap \aff(G).
$$
The left-hand side is a relatively open neighborhood of $x$ in the $j$-dimensional affine space $L\cap\aff(G)$ and therefore has dimension $j$. The right-hand side also has dimension $j$. It follows that $\dim(G \cap L)=j$. 
Therefore, $ G\cap L\in\mathcal F_j(Q \cap L)$, which proves (2).

\medskip
\noindent
\emph{Proof of (3).}
Let $A\in\mathcal F_j(Q\cap L)$ and choose $x\in\relint(A)$. Since a polytope is the disjoint union of the relative interiors of its faces, there is a unique face $G$ of $Q$ such that $x\in\relint(G)$. Since $x\in G\cap L$, property~(a) of Lemma~\ref{lem:app_general_position_assumption} gives $\dim(G)\geq\ell$.

As in the proof of (2), $G\cap L$ is a face of $Q\cap L$. Hence, both $A$ and $G \cap L$ are faces of $Q \cap L$, so they are either equal, or their relative interiors are disjoint.
We chose $x\in \relint(A)$. Now we show that $x \in \relint(G\cap L)$ as well.

Indeed, since  $x \in \relint(G)$, as in the proof of (2), there exists $\eps>0$ such that
\[
\left\{
z\in L\cap\aff(G):\|z-x\|<\eps
\right\}
\subseteq
G\cap L
\subseteq
L\cap\aff(G).
\]
The set on the left-hand side is a non-empty relatively open subset of
$L\cap\aff(G)$ and therefore has affine hull $L\cap\aff(G)$. Taking
affine hulls in the displayed inclusions hence gives
$
\aff(G\cap L)=L\cap\aff(G).
$
Consequently, the first inclusion can be written as
\[
\left\{
z\in\aff(G\cap L):\|z-x\|<\varepsilon
\right\}
\subseteq
G\cap L,
\]
so indeed $x\in\relint(G\cap L)$.

Thus, $x \in \relint(A) \cap \relint(G\cap L)$. Since $A$ and $G\cap L$ are faces of $Q\cap L$ and distinct faces have disjoint relative interiors, it follows that $A=G\cap L$.

It remains to determine the dimension of $G$. Since $\dim(G)\geq\ell$, property~(b) of Lemma~\ref{lem:app_general_position_assumption} and the equality $\aff(G\cap L)=L\cap\aff(G)$ give
\[
j
=
\dim(A)
=
\dim(G\cap L)
=
\dim\bigl(L\cap\aff(G)\bigr)
=
\dim(G)-\ell.
\]
Consequently, $\dim(G)=j+\ell$, and hence $G \in\mathcal F_{j+\ell}(Q)$ and $A=G\cap L$. This proves~(3).
\end{proof}

\section*{Acknowledgement}
The author thanks Zakhar Kabluchko for helpful discussions and valuable comments on an earlier version of the manuscript.

Supported by the DFG under Germany's Excellence Strategy EXC 2044/2 - 390685587, Mathematics M\"unster: \emph{Dynamics-Geometry-Structure}, by the DFG priority program SPP 2265 \emph{Random Geometric Systems}, and by the DFG Research Training Group \emph{Rigorous Analysis of
Complex Random Systems} (RTG 3027, Project Number 524444762).

\section*{Declarations}

\subsection*{Statement on the use of generative AI}
ChatGPT assisted with mathematical discussions, the development and checking of arguments, literature searches and language editing. In particular, it suggested applying the Hubbard--Stratonovich identity~\eqref{eq:Hubbard_Stratonovich} to the Gaussian integral representation~\eqref{eq:solid_angle_simplicial_cone_general_explicit} to separate the integration variables.
Discussions with ChatGPT also contributed to the identification of the framework of simplicial cones admitting generators whose Gram matrix is the sum of a positive diagonal matrix and a low-rank symmetric matrix.

All suggestions produced by the AI were critically examined and
independently verified by the author.

\subsection*{Conflict of interest statement}
The author declares that he has no conflicts of interest.

\subsection*{Data availability statement}
We do not analyse or generate any datasets.

\addcontentsline{toc}{section}{References}
\bibliography{solid_angles_random_shifted_sections_bib}
\bibliographystyle{plainnat}

\vspace{1cm}

\footnotesize

\textsc{Philipp Schange: Institut f\"ur Mathematische Stochastik, Universit\"at M\"unster,
	Orl\'eans-Ring 10,
	48149 M\"unster, Germany}\\
\textit{E-mail}: \texttt{philipp.schange@uni-muenster.de}
\end{document}